\newtheorem{thm}{Theorem}[section]
\newtheorem{lem}[thm]{Lemma}
\newtheorem{defn}[thm]{Definition}
\newtheorem{corollary}[thm]{Corollary}
\theoremstyle{definition}
\newtheorem{ass}[thm]{Assumption}
\newtheorem{example}[thm]{Example}
\theoremstyle{remark}
\DeclareSymbolFont{AMSb}{U}{msb}{m}{n}
\DeclareMathSymbol{\N}{\mathbin}{AMSb}{"4E}
\DeclareMathSymbol{\Z}{\mathbin}{AMSb}{"5A}
\DeclareMathSymbol{\R}{\mathbin}{AMSb}{"52}
\DeclareMathSymbol{\Q}{\mathbin}{AMSb}{"51}
\DeclareMathSymbol{\I}{\mathbin}{AMSb}{"49}
\DeclareMathSymbol{\C}{\mathbin}{AMSb}{"43}
\title{Diffusion Parameter Estimation for the Homogenized Equation}
\author{Theodoros Manikas and Anastasia Papavasiliou}
\address{Theodoros Manikas \\ Department of Statistics \\ University of Warwick \\ Coventry \\ CV4 7AL \\UK}
\address{Anastasia Papavasiliou \\ Department of Statistics \\ University of Warwick \\ Coventry \\ CV4 7AL \\UK}
\date{\today}                                           
\thanks{AP is supported by the Leverhulme RPG-2013-270: ``Statistical Inference of Complex Systems through Rough Paths''}
\begin{document}
\maketitle

\begin{abstract}
We construct a novel estimator for the diffusion coefficient of the limiting homogenized equation, when observing the slow dynamics of a multiscale model, in the case when the slow dynamics are of bounded variation. Previous research suggests subsampling the data on fixed intervals and computing the corresponding quadratic variation (see, for example, \cite{pavliotis2007parameter}). However, to achieve optimality, this approach requires knowledge of scale separation variable $\epsilon$. Instead, we suggest computing the quadratic variation corresponding to the local extrema of the slow process. Our approach results to a natural subsampling and avoids the issue of choosing a subsampling rate. We prove that the estimator is asymptotically unbiased and we numerically demonstrate that its $L_2$-error is smaller than the one achieved in \cite{pavliotis2007parameter}.
\end{abstract}

\section{Introduction}
\label{chap: intro}

It is often the case that the most accurate models for describing the dynamics of physical or human-driven activity are multiscale in nature. For example, high-frequency financial data often exhibits multiscale characteristics in the sense that disparate structural features are associated with different time scales.  These features are usually described by the term market microstructure noise, which contains all different types of market inconsistencies such as non-synchronous trading and bid-ask spread.  In \cite{tsay2005analysis}, the author describes each of these effects and gives a comprehensive review. Processes exhibiting multiscale characteristics  also appear in other application areas, such as molecular dynamics \cite{schlick2010molecular}, atmospheric sciences or oceanography (see, for example,  \cite{majda2001mathematical,majda2006stochastic} and \cite{katsoulakis2004multiscale,katsoulakis2005multiscale}) and network traffic data \cite{abry2002multiscale}.

Finding a coarse-grained model that can effectively describe the dynamics of the initial multiscale model is an important problem and a highly active research area in applied mathematics. This is mainly due to the fact that such models are much more efficient to use in practice. Once the coarse-grained model has been extracted, the corresponding free parameters need to be estimated by fitting the model to the data. In this framework, the problem that one is confronted with is the mismatch between the coarse-grained model and the data generated by the full multiscale system.

The parameter estimation problem in the context of multiscale diffusions can be separated into four different cases, depending on (i) whether the limiting equation is the averaging or homogenization limit and (ii) whether we are interested in estimating the drift or the diffusion coefficient of the limiting equation. In \cite{pavliotis2007parameter}, the authors study all problems but for particular types of diffusions where we can get closed form estimators for the unknown parameters of the limiting equation. In \cite{papavasiliou2009maximum}, the authors discuss the problem in a general context but only for drift estimation. In both papers, the authors suggest using the Maximum Likelihood estimators corresponding to the limiting model with observation of the slow variable that have been sufficiently subsampled. This methodology has been applied to molecular dynamics (see \cite{pavliotis2008parameter}) and high-frequency data (see \cite{sykulski2008multiscale}).

In this paper, we are concerned with the estimation of the diffusion coefficient of the homogenization limit. This has be addressed in \cite{pavliotis2007parameter}, where the authors show that the quadratic variation on subsampled data at intervals of size $\delta$ converges in law to the limiting diffusion coefficient as $\epsilon\to0$, provided that $\delta = \epsilon^\alpha$, for $\alpha\in(0,1)$. They also show that the $L_2$-error is minimized for $\alpha=\frac{1}{2}$ and is of order ${\mathcal O}(\epsilon^\frac{2}{3})$. Some further discussion of these estimators and their properties can be found in \cite{ZhangPapavasiliou}. However, the scale separation variable $\epsilon$ is not known, so one cannot be sure that a chosen $\delta$ leads to the right result, let alone choose the optimal subsampling rate. 

This problem has also been addressed in \cite{papavasiliou2011coarse}. The estimator proposed there is the total $p$\textendash variation, defined as the supremum of finite quadratic sums over all possible partitions. It was shown that the estimator is asymptotically unbiased and its $L_2$-error is of order ${\mathcal O}(\epsilon)$, thus improving the $L_2$-error of \cite{pavliotis2007parameter}, while avoiding the issues related to choosing a subsampling rate. However, due to the technical difficulties related to dealing with the total $p$-variation, the paper only discusses the diffusion estimation problem for a mutliscale Ornstein-Uhlenbeck process, with the slow dynamics being of bounded variation (the slow dynamics are often called `natural Brownian motion')

In this paper, we build upon ideas in \cite{papavasiliou2011coarse}. Our estimator, however, is simpler to work with as it does not involve a supremum, making it both more practical and easier to work with. Moreover, in \cite{papavasiliou2011coarse} the author assumes continuous observation of the slow process while we make the more realistic assumption of discrete observations. We consider mutliscale diffusions whose slow dynamics are of bounded variation and the diffusion coefficient of the homogenized is constant.

The remainder of the paper is organised as follows. In section \ref{sec: setting}, we describe the precise class of models and we state the statistical inference question that we are interested in. We, then, suggest a novel estimator. In section \ref{sec: OU}, we prove that our estimator is asymptotically unbiased in the case of the Ornstein-Uhlenbeck (OU) model within the context of the general family of models that we consider. In section \ref{sec: general}, we study the properties of the estimator for a more general class of models than the OU. Finally, in section \ref{sec: numerical_results}, we present a numerical investigation of the properties of our estimator. In particular, we investigate the behaviour of the $L_2$ error and we numerically demonstrate that it is of order ${\mathcal O}(\epsilon)$.


\section{Setting}
\label{sec: setting}

We consider the following system of stochastic differential equations:
\begin{subequations}
\begin{align}
dx^\epsilon_t & = \frac{1}{\epsilon}f(y^\epsilon_t)dt,\label{eq:general_slow_variable}\\
dy^\epsilon_t & = \frac{1}{\epsilon^{2}}g(y^\epsilon_t)dt+\frac{\beta(y^\epsilon_t)}{\epsilon}dV_t,\label{eq:general_fast_variable}
\end{align}
\label{eq:general_model}\end{subequations}
where $V$ is standard Brownian motion, defined on the filtered probability space $(\Omega, \{{\mathcal F_t}\}_{t>0}, {\mathbb P})$. We are interested in the case where the `slow' component of the process, $x^\epsilon$, converges in distribution to the solution of a stochastic differential equation
\begin{equation}
dX_t=\sigma dW_t,\qquad X(0)=x_{0},\label{eq:Homo_general_model}
\end{equation}
where $\sigma$ is a constant depending on $f,g$ and $\beta$ and $W$ is also standard Brownian motion. This convergence holds under appropriate assumptions to be discussed later (see \cite{pavliotis2008multiscale}). We call this equation `the homogenized equation' and its solution $X$ the `homogenization limit'. 

In this paper, we restrict our study to the case where both $x^\epsilon$ and its limit $X$ are one-dimensional processes. Our goal is to estimate the diffusion coefficient $\sigma^2$ of the homogenized equation from discrete observations of the slow process $x^\epsilon$. More precisely, let us assume that we observe $\{x^\epsilon_{t_i}, i=0,\dots,n\}$, for $t_i = i\delta$ and $n\delta = T$. We want to construct an estimator for the diffusion coefficient $\sigma^2$, such that as $n\to\infty$, the approximation error is of order ${\mathcal O}(\epsilon)$. Note that $\epsilon$ is a fixed variable that is inherent to the process and we have no control over while we have some control over $n$ (how often to sample or for how long). For the rest of the paper, however, we will assume that $T$ is fixed and equal to $T=1$, so $n\to\infty$ is equivalent to $\delta\to 0$. Note that while the grid points $t_i$ depend on $\delta$ (or, equivalently, $n$), to ease the notation we will not explicitly show this dependence unless there is a risk of ambiguity. 

If we observed the homogenized equation $X$ rather than the slow process $x^\epsilon$, then it is well known that $\sigma^2$ can be efficiently estimated from the normalised Quadratic Variation, appropriately discretized to only depend on the discrete observations, i.e.
\begin{equation}
D_{2}\left(X\right)_n=\sum_{i=1}^n(\Delta X_{t_i})^{2}
\end{equation}
where $\Delta X_{t_i}:=X_{t_i}-X_{t_{i-1}}$ for $t_i\in{\mathcal D}_n = \{ k\delta, k=0,\dots,n\}$. Then, we know that $D_{2}\left(X\right)_n\to \sigma^2$ almost surely, as $n\to\infty$. However, if instead we use the bounded variation process $x^\epsilon$, the corresponding quadratic variation $D_{2}\left(x^\epsilon\right)_n$ converges a.s. to $0$. Thus, because of the mismatch between model and data, the standard estimator is not longer useful. 

To avoid this problem, in \cite{papavasiliou2011coarse}, the author suggests using the total $2$-variation of the process (see \cite{lyons2002system}) rather than the quadratic variation, defined as 
\begin{equation}
D_{2}^{\text{Total}}\left(x^\epsilon\right)_{n}=\sup_{\mathcal{D}([0,T])}\left(\sum_{\tau_{i}\in\mathcal{D}([0,T])}(x^\epsilon_{\tau_{i}}-x^\epsilon_{\tau_{i-1}})^{2}
\right),
\end{equation}
where the supremum is over all finite partitions of $[0,T]$. By taking the supremum over all finite partitions, the total $2$-variation can only be zero if the path $x^\epsilon$ is constant, so it maintains much more information than quadratic variation. In the case of a piecewise linear path, the supremum is achieved at a subset of the extremal points of the path \cite{Driver13}. However, this is still computationally very inefficient and a cause of technical difficulties, which is what limited \cite{papavasiliou2011coarse} to the analysis to the mutliscale Ornstein-Uhlenbeck model. Moreover, this estimator assumes continuous observation of the slow process $x^\epsilon$, which is unrealistic. 

In this paper, we construct a novel estimator that we will call the Extrema Quadratic Variation, whose construction is based on a simplification of the total $2$-variation. First, we approximate the slow process $x^\epsilon$ by its linear interpolation on the observations that we will denote by $x^\epsilon(n)$, so that the estimator only depends on available data. Then, instead of computing the total $2$-variation by identifying the subset of the extremal points where the supremum is achieved, we consider all extremal points. More precisely, the Extrema Quadratic Variation is defined as follows:
\begin{defn}
\label{subsec: ExtQV}
Let $x:[0,T]\rightarrow\mathbb{R}$ be a real\textendash valued continuous path and let $x(n)$ be the piecewise linear interpolation of $x$ on the homogeneous grid ${\mathcal D}_n = \{i\delta, i=0,\dots,n\}$ with $n\delta = T$. We define the {\it Extrema Quadratic Variation} (ExtQV) of the path on grid ${\mathcal D}_n$ as
\begin{equation}
D_{2}^{\text{Ext}}\left(x\right)_n= \sum_{\tau_{i}\in\mathcal{E}_{n}([0,T])}(x_{\tau_{i}}-x_{\tau_{i-1}})^{2},
\end{equation}
where $\mathcal{E}_{n}([0,T])=\left\{0= \tau_{0},\tau_{1},...,\tau_{k}=T\right\}$ is the set of local extremal points of $x(n)$. We say that a point $t_{i}$ in ${\mathcal D}_n$ is an extremal point and we write $t_{i}\in\mathcal{E}_{n}([0,T])$ if $\Delta x_{t_{i}}\Delta x_{t_{i+1}}=\left(x_{t_{i}}-x_{t_{i-1}}\right)\left(x_{t_{i+1}}-x_{t_{i}}\right)<0$.
\end{defn}

The computation of the quadratic variation requires the consideration of all the increments of the original path whereas the extrema quadratic variation only considers the increments of the extremal path. Note that as $\epsilon$ gets smaller, the process $x^\epsilon$ gets closer to $X$, which is a process of finite Quadratic Variation and thus, we expect the number of local extremal points to increase. Thus, the Extrema Quadratic Variation provides a natural subsampling of the process which depends on unknown $\epsilon$. This is why we expect it to outperform the Quadratic Variation estimator on the subsampled process suggested in \cite{pavliotis2007parameter}.

In Figure \ref{fig: OriginalVsExtrema} we illustrate graphically an example of an extremal path. The black line is the linear interpolation of the original path $x$ on the grin ${\mathcal D}_n$ and the red line is the corresponding extremal path.

\begin{figure}[H]
\begin{center}
\includegraphics[width=11cm,height=6.6cm]{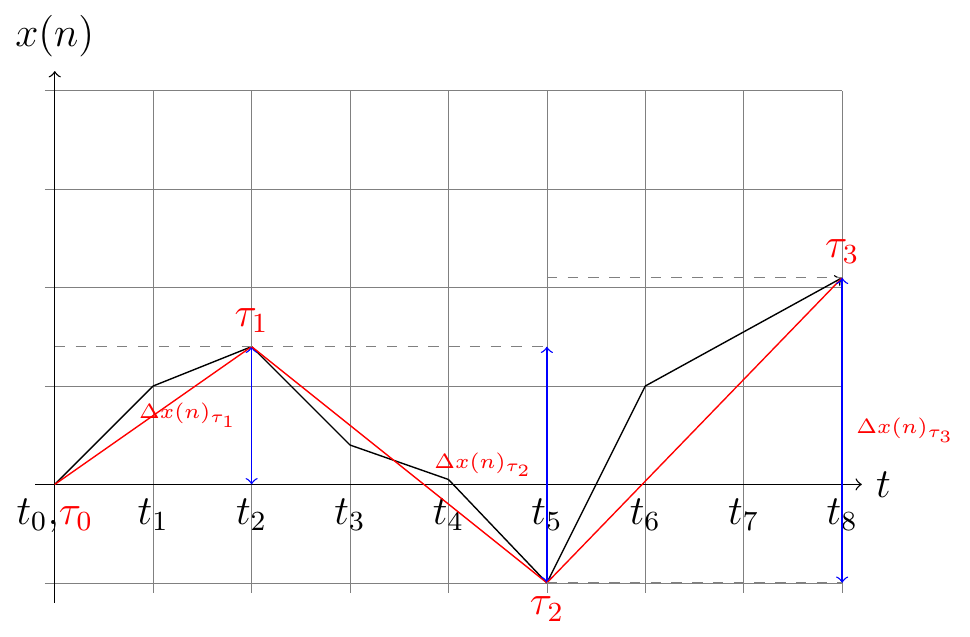}
\end{center}
\caption[Graphical representation of an extremal path.]{Graphical representation of an extremal path: the original path (black line) and the extremal path (red line).}
\label{fig: OriginalVsExtrema}
\end{figure}

An alternative way to compute $D_{2}^{\text{Ext}}\left(x\right)_n$ that appears to be very useful in the analytic computation of the expectation is presented below. By definition, $D_{2}^{\text{Ext}}\left(x\right)_n$ is the sum of squared returns of the original process plus two times the product of those increments such that the consecutive products of the increments between these two are all positive.
So, by expanding the squares, $D_{2}^{\text{Ext}}\left(x\right)_n$ can be written as:
\begin{align}
D_{2}^{\text{Ext}}(x)_n& =D_{2}(x)_n + 2\sum_{i=1}^{j-1}\sum_{j=2}^{n}\bigg(\Delta x_{t_{i}}\Delta x_{t_{j}}\prod_{k=j}^{i-1}\mathbf{1}_{\R_+}\left(\Delta x_{t_k} \Delta x_{t_{k+1}}\right)\bigg), \label{eq: ext_QV}
\end{align}
Note that the event of $\{ \Delta x_{t_k} \Delta x_{t_{k+1} >0, k = j,\dots, i-1\ }$ can be written as the union of events $\{ \Delta x_{t_k} >0, k = j,\dots, i\}$ and $\{ \Delta x_{t_k} <0, k = j,\dots, i\}$, i.e. increments having the same sign is the same as all increments being either positive or negative. This will allow us to simplify computations further.


\section{A toy example}

\label{sec: OU}

In order to build intuition about the behaviour of the Extrema Quadratic Variation estimator defined in \ref{subsec: ExtQV} , we start by studying its properties in the context of a system that is a spacial case  \eqref{eq:general_model} and also a special case or the Ornstein-Uhlenbeck model. More specifically, we consider the following model
\begin{subequations}
\begin{align}
dx^{\epsilon}_t & =\frac{\sigma}{\epsilon}y^{\epsilon}_tdt,\label{eq: slow_dyn_multi}\\
dy^{\epsilon}_t & =-\frac{1}{\epsilon^{2}}y^{\epsilon}_tdt+\frac{1}{\epsilon}dW_t,\label{eq: fast_dyn_multi}
\end{align}
\label{subeq: multiOU}
\end{subequations}
where $W$ denotes the standard one-dimensional Brownian motion defined on the filtered probability space $(\Omega, \{{\mathcal F_t}\}_{t>0}, {\mathbb P})$, $\sigma\in\mathbb{R}_{+}$ is a positive constant and $0<\epsilon<<1$ denotes a small parameter that controls the scale separation. The fast dynamics are described by an Ornstein-Uhlenbeck process whose invariant distribution is the Gaussian distribution ${\mathcal N}(0,\frac{1}{2})$. We will assume that $y_0$ is also a random variable with the invariant distribution ${\mathcal N}(0,\frac{1}{2})$, so that process $y^\epsilon$ is stationary.

It is easy to see that the slow process $x^\epsilon$ can be equivalently expressed as the solution of the following SDE
\begin{equation}
dx^{\epsilon}_t=\sigma\left(dW_t-\epsilon dy^{\epsilon}_t\right).
\label{eq: multi_OU_equivalent_expression}
\end{equation}
Therefore, allowing $\epsilon\rightarrow0$ we deduce that the corresponding homogenization limit is the solution to
\begin{equation}
dX_t=\sigma dW_t,\qquad\quad X_0=x_{0}.\label{eq: homo_SDE}
\end{equation}
In this case, the convergence holds pathwise in $L_2$ \cite{ZhangPapavasiliou}.

We will show that, in this case, the Extrema Quadratic Variation estimator is asymptotically unbiased. More precisely, we prove the following:

\begin{thm}\label{thm: main_theorem_OUmodel}
Let $x^{\epsilon}:[0,T]\rightarrow\mathbb{R}$ be a real\textendash valued
path described by Eq.\eqref{subeq: multiOU}. Then,
\begin{equation}
\lim_{\epsilon\rightarrow0}\lim_{n\rightarrow\infty}\mathbb{E}\left[D_{2}^{\text{Ext}}\left(x^{\epsilon}\right)_{n}\right]=\sigma^{2}.
\end{equation}
\end{thm}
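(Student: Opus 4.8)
The plan is to compute $\mathbb{E}\left[D_{2}^{\text{Ext}}\left(x^{\epsilon}\right)_{n}\right]$ using the expansion in Eq.~\eqref{eq: ext_QV}, analyse the two resulting pieces (the plain quadratic variation term $D_2(x^\epsilon)_n$ and the double-sum cross term), and take the limits $n\to\infty$ then $\epsilon\to0$ in that order. The first key observation is that the increments $\Delta x^\epsilon_{t_i}$ form a stationary Gaussian sequence. Indeed, from Eq.~\eqref{eq: slow_dyn_multi}, $\Delta x^\epsilon_{t_i} = \frac{\sigma}{\epsilon}\int_{t_{i-1}}^{t_i} y^\epsilon_s\,ds$, and since $y^\epsilon$ is a stationary Gaussian (Ornstein--Uhlenbeck) process, these increments are jointly Gaussian with mean zero and a covariance structure I can compute explicitly from the OU covariance $\mathbb{E}[y^\epsilon_s y^\epsilon_t] = \frac{1}{2}e^{-|t-s|/\epsilon^2}$. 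So first I would derive closed-form expressions for $\operatorname{Var}(\Delta x^\epsilon_{t_i})$ and for the correlations $\rho_{ij} = \operatorname{Corr}(\Delta x^\epsilon_{t_i}, \Delta x^\epsilon_{t_j})$, noting that the stationarity makes $\rho_{ij}$ depend only on $|i-j|$ and on the ratio $\delta/\epsilon^2$.

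The second step handles the cross term. Because the indicator events rewrite (per the remark following Eq.~\eqref{eq: ext_QV}) as "all increments from $t_j$ to $t_i$ share a common sign,'' each summand has the form $\mathbb{E}\left[\Delta x^\epsilon_{t_i}\,\Delta x^\epsilon_{t_j}\,\mathbf{1}\{\Delta x^\epsilon_{t_k}>0,\ k=j,\dots,i\} + (\text{same with }<0)\right]$. This is an expectation of a product of two coordinates of a centred multivariate Gaussian restricted to an orthant. I would exploit the structure of the OU increments to make this tractable: the key simplification is that the correlation $\rho_{|i-j|}$ decays geometrically in $|i-j|$ with a rate governed by $\delta/\epsilon^2$, so the dominant contributions to the double sum come from nearby indices, and the orthant probabilities together with the conditional product expectations can be evaluated or tightly estimated using the Gaussian correlation structure (e.g.\ via Price's theorem / Mehler-type formulas for bivariate-Gaussian orthant integrals, combined with Markovianity of the increment sequence to factor the joint orthant event).

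The third step is to take $n\to\infty$ with $\epsilon$ fixed. Here I expect $D_2(x^\epsilon)_n\to 0$ as noted in the Setting section (the path is bounded variation), so the entire limit must come from the cross term, which should converge to a telescoping/geometric series in the per-step correlation; I would verify the interchange of limit and expectation/summation via a dominated-convergence or uniform-integrability argument using the explicit Gaussian moment bounds. Finally I would take $\epsilon\to0$: as $\epsilon\to0$ the correlation length $\epsilon^2$ shrinks relative to the (now infinitesimal) grid spacing in the right combined way, and I would show the limiting series collapses to exactly $\sigma^2$, matching the homogenized diffusion coefficient from Eq.~\eqref{eq: homo_SDE}.

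The main obstacle will be the orthant expectations in the cross term: evaluating $\mathbb{E}[\Delta x^\epsilon_{t_i}\,\Delta x^\epsilon_{t_j}\,\mathbf{1}\{\text{all intermediate increments one-signed}\}]$ for a correlated Gaussian vector has no simple closed form in general, so the argument hinges on exploiting the near-Markov (geometrically decaying) correlation of OU increments to reduce the multivariate orthant event to a manageable product structure and to control the error from this reduction uniformly as $n\to\infty$. Getting the two iterated limits to commute appropriately, and ensuring the $\epsilon\to0$ collapse yields exactly the constant $\sigma^2$ rather than some $\epsilon$-dependent constant, is the delicate quantitative heart of the proof.
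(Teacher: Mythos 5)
Your skeleton coincides with the paper's: you start from the expansion \eqref{eq: ext_QV}, observe that the increments form a mean-zero stationary Gaussian sequence, kill the plain quadratic-variation term as $n\to\infty$, and take the iterated limits in the same order. The gap is that the step you yourself flag as the ``main obstacle'' --- the orthant expectations --- is exactly where the whole proof lives, and the tools you propose for it would fail. First, the increment sequence is \emph{not} Markov: from $\mathbb{E}[y^\epsilon_s y^\epsilon_t]=\frac{1}{2}e^{-|t-s|/\epsilon^2}$ one computes, for lag $m=|i-j|\geq 1$, that $\mathrm{Cov}(\Delta x^\epsilon_{t_i},\Delta x^\epsilon_{t_j})=\frac{\sigma^2\epsilon^2}{2}\left(1-e^{-\delta/\epsilon^2}\right)^2 e^{-(m-1)\delta/\epsilon^2}$, i.e. $\rho_m=\rho_1 r^{m-1}$ with $r=e^{-\delta/\epsilon^2}$, whereas a stationary Gaussian sequence is Markov precisely when $\rho_m=\rho_1^m$, and one checks $\rho_1\neq r$. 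The increments are a noisy linear functional of the Markov chain $(y^\epsilon_{t_k})_k$, not a Markov chain themselves, so there is no exact factorization of the joint sign event. Second, and more damaging, you must take $n\to\infty$ at fixed $\epsilon$ first, and in that regime $r=e^{-\delta/\epsilon^2}\to 1$: the per-step correlation tends to one, the number of lags whose correlation stays bounded away from zero grows like $\epsilon^2/\delta=n\epsilon^2\to\infty$, and the orthant events therefore have unboundedly growing dimension with nearly singular correlation matrices. So ``dominant contributions come from nearby indices'' is false in the relevant limit, and bivariate devices (Price's theorem, Mehler expansions) give no control over these high-dimensional orthant integrals.

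The idea you are missing is that the orthant constraint can be shown to be asymptotically \emph{costless}, so it never has to be evaluated. The paper's Lemma \ref{lemma: OU} writes $\Delta x^\epsilon_{t_k}=\sigma\epsilon\left(1-e^{-\delta/\epsilon^2}\right)y^\epsilon_{t_{k-1}}+(\text{stochastic-integral remainder})$, with the remainder of order $\delta^{3/2}$ and hence negligible in all the sums; this converts the sign conditions on the increments into sign conditions on the $y^\epsilon_{t_{j-1}}$ themselves, where the Markov property actually holds. The joint sign event then equals $\{\tau^{\delta,\epsilon}>t_{k-1}\}$, where $\tau^{\delta,\epsilon}$ is the first sign change of the discretized fast process, and one splits $\mathbb{E}\left[y^\epsilon_0\mathbf{1}_{\R_+}(y^\epsilon_0)\,y^\epsilon_{t_{k-1}}\mathbf{1}_{\{\tau^{\delta,\epsilon}>t_{k-1}\}}\right]$ into the unconstrained expectation minus the expectation on $\{\tau^{\delta,\epsilon}\leq t_{k-1}\}$. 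The unconstrained piece is computed exactly from the OU semigroup, $\mathbb{E}[y^\epsilon_{t_{k-1}}\mid y^\epsilon_0]=e^{-t_{k-1}/\epsilon^2}y^\epsilon_0$, and summing the resulting series gives $1+{\mathcal O}(\epsilon^2)$ after $n\to\infty$; the constrained piece is bounded, by conditioning at the stopping time and Cauchy--Schwartz, by a constant times $\mathbb{E}\left[(y^\epsilon_{\tau^{\delta,\epsilon}})^2\right]^{1/2}$, which vanishes as $n\to\infty$ because $y^\epsilon_{\tau^{\delta,\epsilon}}\to 0$ almost surely by path continuity. Without this mechanism (or an equivalent one) your second and third steps remain a plan rather than a proof.
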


Before proving the theorem, we prove the following lemma that allows us to write the increment of the slow process $x^\epsilon$ in terms of the fast process $y^\epsilon$.

\begin{lem}
\label{lemma: OU}
Let $(x^\epsilon, y^\epsilon)$ satisfy \eqref{subeq: multiOU}. Then, we can write
\begin{equation}
\Delta x^{\epsilon}_{t_{k}}:= x^\epsilon_{t_k}-x^\epsilon_{t_{k-1}}  =\sigma\left(\epsilon(1-e^\frac{\delta}{\epsilon^2})y^{\epsilon}(t_{k-1})-\int_{t_{k-1}}^{t_k}
\left(e^{-\frac{(t_{k}-u)}{\epsilon^2}}-1\right)dW_{u}\right).
\label{eq: slow_increment_OU}
\end{equation}
\end{lem}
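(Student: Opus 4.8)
The plan is to solve the linear SDE for $y^\epsilon$ explicitly and then integrate the slow dynamics \eqref{eq: slow_dyn_multi} over the interval $[t_{k-1}, t_k]$. First I would write down the variation-of-constants (Duhamel) solution for the Ornstein--Uhlenbeck fast process \eqref{eq: fast_dyn_multi}: since $dy^\epsilon_t = -\frac{1}{\epsilon^2} y^\epsilon_t\,dt + \frac{1}{\epsilon}dW_t$, the solution starting from time $s$ is
\begin{equation}
y^\epsilon_t = e^{-\frac{(t-s)}{\epsilon^2}} y^\epsilon_s + \frac{1}{\epsilon}\int_s^t e^{-\frac{(t-u)}{\epsilon^2}}\,dW_u .
\end{equation}
This is the standard closed form for a linear SDE and requires no new ideas; I would verify it by It\^o's formula applied to $e^{t/\epsilon^2} y^\epsilon_t$ if a check is wanted.

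Next I would compute $\Delta x^\epsilon_{t_k} = \frac{\sigma}{\epsilon}\int_{t_{k-1}}^{t_k} y^\epsilon_s\,ds$ directly from \eqref{eq: slow_dyn_multi}, substituting the explicit expression for $y^\epsilon_s$ with base point $s=t_{k-1}$. This splits the increment into a deterministic-in-$y^\epsilon_{t_{k-1}}$ term and a stochastic double integral:
\begin{equation}
\Delta x^\epsilon_{t_k} = \frac{\sigma}{\epsilon}\, y^\epsilon_{t_{k-1}}\int_{t_{k-1}}^{t_k} e^{-\frac{(s-t_{k-1})}{\epsilon^2}}\,ds + \frac{\sigma}{\epsilon^2}\int_{t_{k-1}}^{t_k}\!\!\int_{t_{k-1}}^{s} e^{-\frac{(s-u)}{\epsilon^2}}\,dW_u\,ds .
\end{equation}
The first integral is elementary and evaluates to $\epsilon^2(1 - e^{-\delta/\epsilon^2})$, which after multiplying by $\frac{\sigma}{\epsilon}$ gives the term $\sigma\,\epsilon(1 - e^{-\delta/\epsilon^2})\,y^\epsilon_{t_{k-1}}$ appearing in \eqref{eq: slow_increment_OU} (up to a sign convention in the exponent that I would reconcile with the statement's $e^{\delta/\epsilon^2}$). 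For the stochastic term, the key manipulation is a stochastic Fubini argument to swap the order of the Lebesgue integral in $s$ and the It\^o integral in $u$; after swapping, the inner $s$-integral $\int_u^{t_k} e^{-(s-u)/\epsilon^2}\,ds = \epsilon^2\!\left(1 - e^{-(t_k-u)/\epsilon^2}\right)$ can be evaluated, and the prefactor $\frac{\sigma}{\epsilon^2}$ cancels against the $\epsilon^2$, yielding exactly $-\sigma\int_{t_{k-1}}^{t_k}\!\left(e^{-(t_k-u)/\epsilon^2} - 1\right)dW_u$.

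The one step needing genuine care, rather than routine calculus, is the application of stochastic Fubini: I would confirm that the integrand $(s,u)\mapsto e^{-(s-u)/\epsilon^2}\mathbf{1}_{\{u\le s\}}$ is jointly measurable, adapted in $u$, and square-integrable on the finite rectangle $[t_{k-1},t_k]^2$, so that the classical stochastic Fubini theorem applies and the interchange is justified. Everything else --- solving the OU SDE, evaluating the deterministic integrals, and collecting the constant factors of $\sigma$ and $\epsilon$ --- is mechanical. Thus I expect the only real obstacle to be bookkeeping: ensuring the exponent signs and the factor $\epsilon(1-e^{\delta/\epsilon^2})$ in \eqref{eq: slow_increment_OU} are matched exactly to what the explicit computation produces.
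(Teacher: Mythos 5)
Your proof is correct, but it follows a genuinely different route from the paper's. The paper never integrates $y^\epsilon$ in time at all: it first substitutes the fast equation \eqref{eq: fast_dyn_multi} into the slow one to get the exact pathwise identity $dx^\epsilon_t = \sigma\left(dW_t - \epsilon\, dy^\epsilon_t\right)$ (this is \eqref{eq: multi_OU_equivalent_expression}), so that $\Delta x^\epsilon_{t_k} = \sigma\left(\Delta W_{t_k} - \epsilon\, \Delta y^\epsilon_{t_k}\right)$, and then only needs the OU solution formula \eqref{eq:OU_sol} to express $\Delta y^\epsilon_{t_k}$ in terms of $y^\epsilon_{t_{k-1}}$ and the Wiener integral $\frac{1}{\epsilon}\int_{t_{k-1}}^{t_k} e^{-(t_k-u)/\epsilon^2}\,dW_u$; writing $\Delta W_{t_k}=\int_{t_{k-1}}^{t_k}dW_u$ and collecting terms gives \eqref{eq: slow_increment_OU} in two lines, with no interchange of integrals anywhere. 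You instead integrate the slow equation directly, substitute the explicit OU solution into the Lebesgue integral, and invoke stochastic Fubini to evaluate the resulting double integral. This is perfectly valid --- your integrand $e^{-(s-u)/\epsilon^2}\mathbf{1}_{\{u\le s\}}$ is deterministic, bounded and jointly measurable on a compact square, so the interchange is classical --- and your bookkeeping does reproduce the statement exactly, including correctly flagging that the exponent $e^{\delta/\epsilon^2}$ in the statement is a sign typo for $e^{-\delta/\epsilon^2}$ (as confirmed by how the lemma is used in \eqref{eq: prod_Delta_x}). What your approach costs is the extra Fubini justification; what it buys is that it is purely mechanical and does not depend on noticing that the slow drift is an exact multiple of the fast drift, which is the special algebraic structure of the toy model that the paper's shortcut exploits.
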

\begin{proof}
First, using \eqref{eq: multi_OU_equivalent_expression}, we get
\[ \Delta x^{\epsilon}_{t_{k}} = \sigma\left(\Delta W_{t_k} - \epsilon \Delta y^\epsilon_{t_k}\right).\]
Using the known formula for the solution of \eqref{eq: fast_dyn_multi} (which can be easily verified using It\^o's formula), we write
\begin{equation}
y^{\epsilon}_{t_{k}}=e^{-\delta/\epsilon^{2}}y^{\epsilon}_{t_{k-1}}+\frac{1}{\epsilon}\int_{t_{k-1}}^{t_{k}}e^{-\frac{(t_{k}-u)}{\epsilon^{2}}}dW_u.
\label{eq:OU_sol}
\end{equation}
The result follows.
\end{proof}

\begin{corollary}
\label{cor: DX OU}
Let $(x^\epsilon, y^\epsilon)$ satisfy \eqref{subeq: multiOU}. Then, if $y^\epsilon_t$ is stationary, the sequence of increments $\{\Delta x^\epsilon_{t_i}\}_{i=1}^n$ is also stationary irrespectively of $x_0$. Moreover, the increments $\{\Delta x^\epsilon_{t_i}\}_{i=1}^n$ are mean zero Gaussian random variables.
\end{corollary}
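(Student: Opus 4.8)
The plan is to prove Corollary \ref{cor: DX OU} by reading off both claims directly from the explicit representation \eqref{eq: slow_increment_OU} established in Lemma \ref{lemma: OU}. Rewriting that formula, I have
\begin{equation*}
\Delta x^{\epsilon}_{t_k}=\sigma\,\epsilon\bigl(1-e^{\delta/\epsilon^2}\bigr)y^{\epsilon}_{t_{k-1}}-\sigma\int_{t_{k-1}}^{t_k}\bigl(e^{-(t_k-u)/\epsilon^2}-1\bigr)\,dW_u,
\end{equation*}
so each increment is a linear combination of two pieces: a multiple of $y^{\epsilon}_{t_{k-1}}$, and a Wiener integral of a deterministic kernel over the single interval $[t_{k-1},t_k]$. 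Since $x_0$ never enters this expression, the first sentence of the stated independence from $x_0$ is immediate.

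For the \emph{Gaussian and mean-zero} claim, I would argue as follows. The process $y^{\epsilon}$ is a linear (Ornstein--Uhlenbeck) functional of the driving Brownian motion $W$ together with the initial value $y^{\epsilon}_0$, which by assumption is $\mathcal N(0,\tfrac12)$ and taken independent of $W$; hence $y^{\epsilon}_{t_{k-1}}$ is Gaussian. The Wiener integral $\int_{t_{k-1}}^{t_k}(e^{-(t_k-u)/\epsilon^2}-1)\,dW_u$ is also Gaussian, being an It\^o integral of a deterministic integrand. More usefully, the whole vector $(y^{\epsilon}_0,W)$ generates a Gaussian family, and every $\Delta x^{\epsilon}_{t_k}$ is a continuous linear functional of it, so in fact the \emph{joint} law of $\{\Delta x^{\epsilon}_{t_i}\}_{i=1}^n$ is jointly Gaussian, not merely marginally so. Taking expectations and using $\mathbb E[y^{\epsilon}_{t_{k-1}}]=0$ (which holds because $y^{\epsilon}$ is stationary with invariant law $\mathcal N(0,\tfrac12)$) together with $\mathbb E\bigl[\int\cdots dW_u\bigr]=0$ gives $\mathbb E[\Delta x^{\epsilon}_{t_k}]=0$ for every $k$.

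Finally, for \emph{stationarity} of the increment sequence, I would verify that a jointly Gaussian mean-zero sequence is stationary precisely when its covariance $\mathbb E[\Delta x^{\epsilon}_{t_j}\,\Delta x^{\epsilon}_{t_{j+m}}]$ depends only on the lag $m$ and not on $j$. Expanding the product using the representation above, the covariance splits into three types of terms: the $y$--$y$ covariance $\mathbb E[y^{\epsilon}_{t_{j-1}}y^{\epsilon}_{t_{j+m-1}}]$, the cross terms pairing $y^{\epsilon}_{t_{j-1}}$ (or $y^{\epsilon}_{t_{j+m-1}}$) with a Wiener integral, and the integral--integral covariance. Since $y^{\epsilon}$ is assumed stationary, $\mathbb E[y^{\epsilon}_{s}y^{\epsilon}_{t}]$ depends only on $t-s$; the deterministic kernels in the Wiener integrals depend only on the relative position $t_k-u$ within each interval of fixed width $\delta$; and the cross terms reduce, via It\^o isometry and the explicit OU solution \eqref{eq:OU_sol}, to integrals of the same kernels against the correlation structure of $y^{\epsilon}$, which is again lag-dependent only. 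Hence every term is a function of $m$ alone, which gives stationarity.

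The step I expect to be the main obstacle is the careful bookkeeping in the covariance computation for stationarity, specifically handling the cross terms between $y^{\epsilon}_{t_{j-1}}$ and the Wiener integrals appearing in \emph{other} increments. These require using \eqref{eq:OU_sol} to express $y^{\epsilon}_{t_{j-1}}$ itself as an It\^o integral (plus its stationary $y^{\epsilon}_0$ contribution) and then applying the It\^o isometry over the correct overlapping intervals; the bulk of the work is confirming that, after this substitution, the limits of integration and the exponential kernels shift rigidly by the lag $m$ and leave no residual dependence on the absolute index $j$. The two distributional claims, by contrast, are essentially immediate from the linearity of the representation in the underlying Gaussian data.
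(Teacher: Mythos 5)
Your proof is correct, and it is consistent with what the paper intends: the corollary is stated without proof as an immediate consequence of Lemma \ref{lemma: OU}, and you derive both claims from that representation (inheriting, harmlessly, the paper's sign typo $e^{\delta/\epsilon^2}$ for $e^{-\delta/\epsilon^2}$ in \eqref{eq: slow_increment_OU}). The one place where your route differs in substance is stationarity: you reduce strict stationarity to weak stationarity via joint Gaussianity of the increment sequence and then check that all covariance terms are lag-dependent, which you correctly flag as the laborious step. This works, but it is heavier than necessary and less robust than the argument the paper's structure suggests. From \eqref{eq: slow_dyn_multi} one has $\Delta x^\epsilon_{t_k}=\frac{\sigma}{\epsilon}\int_{t_{k-1}}^{t_k}y^\epsilon_u\,du$, i.e.\ each increment is the \emph{same} fixed functional applied to the time-shifted segment $(y^\epsilon_{t_{k-1}+s})_{s\in[0,\delta]}$; since $y^\epsilon$ started from its invariant law is strictly stationary as a process, the vector of increments is strictly stationary outright, with no covariance bookkeeping and no appeal to Gaussianity (Gaussianity and mean zero are then read off from \eqref{eq: slow_increment_OU} exactly as you do). The practical advantage of this shorter route is that it is the one that survives in Section \ref{sec: general}: in the proof of Theorem \ref{thm: general_model_thm} the paper asserts that ``we can still assume stationarity of increments'' for the general model \eqref{eq:general_model}, where the increments are no longer jointly Gaussian and your covariance-based equivalence between weak and strict stationarity is unavailable, whereas the functional-of-a-stationary-process argument applies verbatim.
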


\begin{proof}[Proof of Theorem \ref{thm: main_theorem_OUmodel}]
Using the expression in \eqref{eq: ext_QV}, we get
\begin{equation}
{\mathbb E}\left( D_{2}^{\text{Ext}}(x^{\epsilon})_{n}\right) = {\mathbb E}\left( D_{2}(x^{\epsilon})_{n}\right)+2\sum_{i=2}^{n}\sum_{j=1}^{i-1}{\mathbb E}\left(\Delta x^{\epsilon}_{t_{i}}\Delta x^{\epsilon}_{t_{j}}\prod_{k=j}^{i-1}\mathbf{1}_{\R_+}\left(\Delta x^{\epsilon}_{t_{k}}\Delta x^{\epsilon}_{t_{k+1}}\right)\right).
\label{eq: ext_QV_2}
\end{equation}
Since $x^\epsilon$ is a bounded variation process, it is not hard to show that
\[ \lim_{n\rightarrow\infty}{\mathbb E}\left( D_{2}(x^{\epsilon})_{n}\right) = 0.\]
Moreover, using corollary \ref{cor: DX OU}, i.e. the stationarity and symmetry of increments, we deduce that
\[ 2\sum_{i=2}^{n}\sum_{j=1}^{i-1}{\mathbb E}\left(\Delta x^{\epsilon}_{t_{i}}\Delta x^{\epsilon}_{t_{j}}\mathbf{1}_{C_n(i,j)}\left(x(n))\right)\right) = 4\sum_{k=2}^{n}(n+1-k)\mathbb{E}
\left( \Delta x^{\epsilon}_{t_{1}}\Delta x^{\epsilon}_{t_{k}}\prod_{j=1}^k\mathbf{1}_{\R_+}\left(\Delta x^{\epsilon}_{t_{j}}\right)\right)
\]
Thus, to prove the theorem we need to show that 
\begin{equation}
\label{eq: reduction1 OU} 
\lim_{\epsilon\rightarrow0}\lim_{n\rightarrow\infty}4\sum_{k=2}^{n}(n+1-k)\mathbb{E}
\left( \Delta x^{\epsilon}_{t_{1}}\Delta x^{\epsilon}_{t_{k}}\prod_{j=1}^k\mathbf{1}_{\R_+}\left(\Delta x^{\epsilon}_{t_{j}}\right)\right) = \sigma^2.
\end{equation}
Using lemma \ref{lemma: OU}, we write
\begin{eqnarray}
\Delta x^{\epsilon}_{t_{1}}\Delta x^{\epsilon}_{t_{k}} &=& \sigma^{2}\bigg(\epsilon^{2}(1-e^{-\frac{\delta}{\epsilon^2}})^{2}y^{\epsilon}_{t_{0}}y^{\epsilon}_{t_{k-1}}\nonumber \\
&  & \quad -\epsilon(1-e^{-\frac{\delta}{\epsilon^2}})y^{\epsilon}_{t_{0}}\int_{t_{k-1}}^{t_k}
\left(e^{-\frac{(t_{k}-u)}{\epsilon^2}}-1\right)dW_{u}\nonumber \\
&  & \qquad-\epsilon(1-e^{-\frac{\delta}{\epsilon^2}})y^{\epsilon}_{t_{k-1}}\int_{t_{0}}^{t_{1}}
\left(e^{-\frac{(t_{1}-u)}{\epsilon^2}}-1\right)dW_{u}\nonumber \\
& &  \quad\qquad + \int_{t_{0}}^{t_{1}}
\left(e^{-\frac{(t_{1}-u)}{\epsilon^2}}-1\right)dW_{u}\int_{t_{k-1}}^{t_k}
\left(e^{-\frac{(t_{k}-u)}{\epsilon^2}}-1\right)dW_{u} \bigg).
\label{eq: prod_Delta_x}
\end{eqnarray}
First, we show that 
\begin{equation} 
\label{eq: 2nd term OU}
\lim_{n\to\infty} 4\epsilon(1-e^{-\frac{\delta}{\epsilon^2}})\sum_{k=2}^{n}(n+1-k)\mathbb{E}
\left(y^{\epsilon}_{t_{0}}\int_{t_{k-1}}^{t_k}
\left(e^{-\frac{(t_{k}-u)}{\epsilon^2}}-1\right)dW_{u}\prod_{j=1}^k\mathbf{1}_{\R_+}\left(\Delta x^{\epsilon}_{t_{j}}\right)\right)=0,
\end{equation}
by obtaining an appropriate bound for the expectation. We write
\begin{eqnarray*}
&\mathbb{E}
\left(y^{\epsilon}_{t_{0}}\int_{t_{k-1}}^{t_k}
\left(e^{-\frac{(t_{k}-u)}{\epsilon^2}}-1\right)dW_{u}\prod_{j=1}^k\mathbf{1}_{\R_+}\left(\Delta x^{\epsilon}_{t_{j}}\right)\right) \leq&\\
&\leq \mathbb{E}
\left(y^{\epsilon}_{t_{0}}\int_{t_{k-1}}^{t_k} \left(e^{-\frac{(t_{k}-u)}{\epsilon^2}}-1\right)dW_{u}\right)\leq&\\
&\leq \mathbb{E}\left(\left(y^{\epsilon}_{t_{0}}\right)^2\right)^\frac{1}{2}\mathbb{E}\left( \left( \int_{t_{k-1}}^{t_k} \left(e^{-\frac{(t_{k}-u)}{\epsilon^2}}-1\right)dW_{u}\right)^2\right)^\frac{1}{2} =&\\
&= \frac{1}{\sqrt{2}}\mathbb{E}\left(  \int_{t_{k-1}}^{t_k} \left(e^{-\frac{(t_{k}-u)}{\epsilon^2}}-1\right)^2 du\right)^\frac{1}{2} = \frac{1}{\sqrt{2}}\left(\delta + \frac{1}{2}\epsilon^2(-3-e^{-\frac{2\delta}{\epsilon^2}}+4e^{-\frac{\delta}{\epsilon^2}})\right)^\frac{1}{2}\sim {\mathcal O}(\delta^\frac{3}{2}),&
\end{eqnarray*}
where the first inequality follows from the monotonicity of expectation, the second from Cauchy-Schwartz and the last equality from It\^o isometry and the stationarity of $y^\epsilon$. Noting that $\delta = \frac{1}{n}$, we can bound the sum in \eqref{eq: 2nd term OU} is bounded by
\[ C(\epsilon) \frac{1}{n}\sum_{k=2}^{n}(n+1-k)\frac{1}{n^\frac{3}{2}},\]
for some constant $C(\epsilon)$ that depends only on $\epsilon$. Taking $n\to\infty$, we get \eqref{eq: 2nd term OU}. In a similar manner, we can show that 
\begin{equation}
\label{eq: 3rd term OU}
\lim_{n\to\infty} 4\epsilon(1-e^{-\frac{\delta}{\epsilon^2}})\sum_{k=2}^{n}(n+1-k)\mathbb{E}
\left(y^{\epsilon}_{t_{k-1}}\int_{t_{0}}^{t_1}
\left(e^{-\frac{(t_{1}-u)}{\epsilon^2}}-1\right)dW_{u}\prod_{j=1}^k\mathbf{1}_{\R_+}\left(\Delta x^{\epsilon}_{t_{j}}\right)\right)=0,
\end{equation}
and
\begin{equation}
\label{eq: 4th term OU}
\lim_{n\to\infty} 4\sum_{k=2}^{n}(n+1-k)\mathbb{E}\left(
\int_{t_{0}}^{t_1}
\left(e^{-\frac{(t_{1}-u)}{\epsilon^2}}-1\right)dW_{u}\int_{t_{k-1}}^{t_k}
\left(e^{-\frac{(t_{k}-v)}{\epsilon^2}}-1\right)dW_{v}\prod_{j=1}^k\mathbf{1}_{\R_+}\left(\Delta x^{\epsilon}_{t_{j}}\right)\right)=0,
\end{equation}
Thus, from \eqref{eq: prod_Delta_x}, \eqref{eq: 2nd term OU}, \eqref{eq: 3rd term OU}, \eqref{eq: 4th term OU}, it follows that to show \eqref{eq: reduction1 OU}, we only need to show
\begin{equation}
\lim_{\epsilon\rightarrow0}\lim_{n\rightarrow\infty}4\epsilon^{2}(1-e^{-\frac{\delta}{\epsilon^2}})^{2}\sum_{k=2}^{n}(n+1-k)\mathbb{E}
\left(y^{\epsilon}_{t_{0}}y^{\epsilon}_{t_{k-1}} \prod_{j=1}^k\mathbf{1}_{\R_+}\left(\Delta x^{\epsilon}_{t_{j}}\right)\right) = 1
\label{eq: reduction2 OU}
\end{equation}
From lemma \ref{lemma: OU}, it follows that
\begin{equation}
\Delta x^{\epsilon}_{t_{j}}>0 \iff y_{t_{j-1}}>\frac{\int_{t_{j-1}}^{t_{j}}\left(e^{-\frac{(t_{j}-u)}{\epsilon^2}}-1\right)dW_{u}}{\epsilon(1-e^{-\frac{\delta}{\epsilon^2}})} := M_j.
\label{eq: M OU}
\end{equation}
With this notation, one can easily check that
\[ 
\mathbf{1}_{\R_+}\left(\Delta x^{\epsilon}_{t_{j}}\right) = \mathbf{1}_{\R_+}\left(y_{t_{j-1}}-M_j\right) \leq \mathbf{1}_{\R_+}\left(y_{t_{j-1}}\right) + \mathbf{1}_{\R_+}\left(|M_j|-|y_{t_{j-1}}|\right)
\]
Then, the expectation in the sum of \eqref{eq: reduction2 OU} can be bounded by
\begin{eqnarray}
&\mathbb{E}\left(y^{\epsilon}_{t_{0}}y^{\epsilon}_{t_{k-1}} \prod_{j=1}^k\mathbf{1}_{\R_+}\left(\Delta x^{\epsilon}_{t_{j}}\right)\right)  \leq&\nonumber\\
&\mathbb{E}\left(y^{\epsilon}_{t_{0}}y^{\epsilon}_{t_{k-1}} \prod_{j=1}^k\mathbf{1}_{\R_+}\left(y^{\epsilon}_{t_{j-1}}\right)\right) + \mathbb{E}
\left(y^{\epsilon}_{t_{0}}y^{\epsilon}_{t_{k-1}} \prod_{j=1}^k\mathbf{1}_{\R_+}\left(|M_j|-|y_{t_{j-1}}|\right)\right).&
\label{eq: term1 OU bound1}
\end{eqnarray}
Using Cauchy-Schwartz, the second expectation in the bound can be further bounded by
\begin{eqnarray}
&\mathbb{E}\left(y^{\epsilon}_{t_{0}}y^{\epsilon}_{t_{k-1}} \prod_{j=1}^k\mathbf{1}_{\R_+}\left(|M_j|-|y_{t_{j-1}}|\right)\right)&\nonumber\\
&\mathbb{E}\left((y^{\epsilon}_{t_{0}})^2\right)^\frac{1}{2}
\mathbb{E}\left((y^{\epsilon}_{t_{k-1}})^2 \prod_{j=1}^k\mathbf{1}_{\R_+}\left(|M_j|-|y_{t_{j-1}}|\right)\right)^\frac{1}{2} \leq \frac{1}{\sqrt{2}}\mathbb{E}\left((M_{k-1}^2 \right)^\frac{1}{2}&
\label{eq: term1 OU bound2}
\end{eqnarray}
Using upper bounds \eqref{eq: term1 OU bound1} and \eqref{eq: term1 OU bound2} in \eqref{eq: reduction2 OU}, we see that one of the terms converges to zero as in \eqref{eq: 2nd term OU}. Thus, it remains to show that
\begin{equation}
\lim_{\epsilon\rightarrow0}\lim_{n\rightarrow\infty}4\epsilon^{2}(1-e^{-\frac{\delta}{\epsilon^2}})^{2}\sum_{k=2}^{n}(n+1-k)\mathbb{E}
\left(y^{\epsilon}_{t_{0}}y^{\epsilon}_{t_{k-1}} \prod_{j=1}^k\mathbf{1}_{\R_+}\left(y^{\epsilon}_{t_{j-1}}\right)\right) = 1.
\label{eq: reduction3 OU}
\end{equation}

Let $\tau^{\delta,\epsilon}_{y_{t_0}}$ be the first time that the discretised process $\{y_{t_k}; k\in\N\}$ becomes negative, starting from $y_{t_0}$, i.e.
\begin{equation}
\label{first 0 crossing}
\tau^{\delta,\epsilon}_{y_{t_0}} = \min\{ k\in\N;  y_{t_k}<0\}.
\end{equation}
Then, given $y_{t_0} >0$,
\[ 
\prod_{j=1}^{k-1}\mathbf{1}_{\R_+}\left(y^{\epsilon}_{t_{j}}\right) = \mathbf{1}_{(t_{k-1},+\infty)}(\tau^{\delta,\epsilon}_{y_{t_0}}),
\]
or, equivalently,
\[ 
\prod_{j=0}^{k-1}\mathbf{1}_{\R_+}\left(y^{\epsilon}_{t_{j}}\right) = \mathbf{1}_{\R_+}\left(y^{\epsilon}_{t_0}\right)\cdot\mathbf{1}_{(t_{k-1},+\infty)}\left(\tau^{\delta,\epsilon}_{y_{t_0}}\right)
\]
Then, the expectation in \eqref{eq: reduction3 OU} can be written as
\begin{eqnarray}
\nonumber\mathbb{E} \left(y^{\epsilon}_{t_{0}}y^{\epsilon}_{t_{k-1}} \prod_{j=1}^k\mathbf{1}_{\R_+}\left(y^{\epsilon}_{t_{j-1}}\right)\right) = \mathbb{E} \left(y^{\epsilon}_{t_{0}}\mathbf{1}_{\R_+}\left(y^{\epsilon}_{t_0}\right)\cdot y^{\epsilon}_{t_{k-1}}\mathbf{1}_{(t_{k-1},+\infty)}\left(\tau^{\delta,\epsilon}_{y_{t_0}}\right)\right) \\
= \mathbb{E} \left(y^{\epsilon}_{t_{0}}\mathbf{1}_{\R_+}\left(y^{\epsilon}_{t_0}\right)\cdot y^{\epsilon}_{t_{k-1}}\right) - \mathbb{E} \left(y^{\epsilon}_{t_{0}}\mathbf{1}_{\R_+}\left(y^{\epsilon}_{t_0}\right)\cdot y^{\epsilon}_{t_{k-1}}\mathbf{1}_{[0,t_{k-1}]}\left(\tau^{\delta,\epsilon}_{y_{t_0}}\right)\right).
\label{reduction 3a OU}
\end{eqnarray}
The first expectation above can be written as
\begin{eqnarray*}
\mathbb{E} \left(y^{\epsilon}_{t_{0}}\mathbf{1}_{\R_+}\left(y^{\epsilon}_{t_0}\right)\cdot y^{\epsilon}_{t_{k-1}}\right) &=& \mathbb{E} \left(y^{\epsilon}_{t_{0}}\mathbf{1}_{\R_+}\left(y^{\epsilon}_{t_0}\right)\mathbb{E}\left( y^{\epsilon}_{t_{k-1}}| y_{t_0}\right)\right)\\ 
&=& e^{-\frac{t_{k-1}}{\epsilon^2}}\mathbb{E} \left((y^{\epsilon}_{t_{0}})^2\mathbf{1}_{\R_+}\left(y^{\epsilon}_{t_0}\right)\right) = \frac{1}{4}e^{-\frac{t_{k-1}}{\epsilon^2}},
\end{eqnarray*}
since $y_{t_0}^{\epsilon}\sim\mathcal{N}\left(0,\frac{1}{2}\right)$ (we have assumed stationarity). Remember that $T=1$, $t_k = k\delta$ and $\delta = \frac{1}{n}$. Then, using the above result, we get that 
\begin{eqnarray*}
4\epsilon^{2}(1-e^{-\frac{\delta}{\epsilon^2}})^{2}\sum_{k=2}^{n}(n+1-k)\mathbb{E} \left(y^{\epsilon}_{t_{0}}\mathbf{1}_{\R_+}\left(y^{\epsilon}_{t_0}\right)\cdot y^{\epsilon}_{t_{k-1}}\right)  = \\
= \epsilon^{2}(1-e^{-\frac{1}{n\epsilon^2}})^{2}\sum_{k=2}^{n}(n+1-k)e^{-\frac{(k-1)}{n\epsilon^2}} \to 1+\epsilon^{2}\left(e^{-1/\epsilon^{2}}-1\right) = 1 + {\mathcal O}(\epsilon^2),
\end{eqnarray*}
as $n\to\infty$. Thus,
\[ \lim_{\epsilon\rightarrow0}\lim_{n\rightarrow\infty}4\epsilon^{2}(1-e^{-\frac{\delta}{\epsilon^2}})^{2}\sum_{k=2}^{n}(n+1-k)\mathbb{E} \left(y^{\epsilon}_{t_{0}}\mathbf{1}_{\R_+}\left(y^{\epsilon}_{t_0}\right)\cdot y^{\epsilon}_{t_{k-1}}\right)= 1.\]
From this result and the decomposition of the expectation given in \eqref{reduction 3a OU}, it follows that to prove \eqref{eq: reduction3 OU}, it is only left to show that
\[ 
\lim_{\epsilon\rightarrow0}\lim_{n\rightarrow\infty}\left|4\epsilon^{2}(1-e^{-\frac{\delta}{\epsilon^2}})^{2}\sum_{k=2}^{n}(n+1-k)\mathbb{E} \left(y^{\epsilon}_{t_{0}}\mathbf{1}_{\R_+}\left(y^{\epsilon}_{t_0}\right)\cdot y^{\epsilon}_{t_{k-1}}\mathbf{1}_{[0,t_{k-1}]}\left(\tau^{\delta,\epsilon}_{y_{t_0}}\right)\right)\right|= 0.
\]
We can restrict our study to the case where the stopping time $\tau^{\delta,\epsilon}_{y_{t_0}}$ is bounded by $t_{k-1}$, i.e. $\tau^{\delta,\epsilon}_{y_{t_0}}\leq t_{k-1}$. Then
\[ \mathbb{E}\left( y^\epsilon_{t_{k-1}}|\tau^{\delta,\epsilon}_{y_{t_0}}\right) = y^\epsilon_{\tau^{\delta,\epsilon}_{y_{t_0}}}e^{-\frac{t_{k-1}-\tau^{\delta,\epsilon}_{y_{t_0}}}{\epsilon^2}}.
\]
Thus,
\begin{eqnarray*}
\left|4\epsilon^{2}(1-e^{-\frac{\delta}{\epsilon^2}})^{2}\sum_{k=2}^{n}(n+1-k)\mathbb{E} \left(y^{\epsilon}_{t_{0}}\mathbf{1}_{\R_+}\left(y^{\epsilon}_{t_0}\right)\cdot y^{\epsilon}_{t_{k-1}}\mathbf{1}_{[0,t_{k-1}]}\left(\tau^{\delta,\epsilon}_{y_{t_0}}\right)\right)\right| \leq\\
4\epsilon^{2}(1-e^{-\frac{\delta}{\epsilon^2}})^{2}\sum_{k=2}^{n}(n+1-k)\mathbb{E} \left(y^{\epsilon}_{t_{0}}\mathbf{1}_{\R_+}\left(y^{\epsilon}_{t_0}\right) |y^\epsilon_{\tau^{\delta,\epsilon}_{y_{t_0}}}|\right) \leq \\
4\epsilon^{2}(1-e^{-\frac{\delta}{\epsilon^2}})^{2}\sum_{k=2}^{n}(n+1-k)\mathbb{E} \left((y^{\epsilon}_{t_{0}})^2\mathbf{1}_{\R_+}\left(y^{\epsilon}_{t_0}\right)\right)^\frac{1}{2}\mathbb{E}\left((y^\epsilon_{\tau^{\delta,\epsilon}_{y_{t_0}}})^2\right)^\frac{1}{2} = \\
= \left( 2\epsilon^{2}(1-e^{-\frac{\delta}{\epsilon^2}})^{2}\sum_{k=2}^{n}(n+1-k)\right)\mathbb{E}\left((y^\epsilon_{\tau^{\delta,\epsilon}_{y_{t_0}}})^2\right)^\frac{1}{2}.
\end{eqnarray*}
First, a straight forward computation gives
\[ \lim_{n\to\infty} 2\epsilon^{2}(1-e^{-\frac{\delta}{\epsilon^2}})^{2}\sum_{k=2}^{n}(n+1-k) = \frac{1}{\epsilon^2}.
\]
It is left to show that $\lim_{n\to\infty} \mathbb{E}\left((y^\epsilon_{\tau^{\delta,\epsilon}_{y_{t_0}}})^2\right) = 0$. This follows directly from the continuity of the diffusion paths with probability 1. First, it follows that $\tau^{\delta,\epsilon}_{y_{t_0}}\to \tau^{\epsilon}_{y_{t_0}}$ as $\delta\to 0$ ($n\to\infty$) with probability 1, where $\tau^{\epsilon}_{y_{t_0}}$ is the first time that the continuous process becomes zero. Then, it follows that $y^\epsilon_{\tau^{\delta,\epsilon}_{y_{t_0}}} \to y^\epsilon_{\tau^{\epsilon}_{y_{t_0}}}$ and by definition, $y^\epsilon_{\tau^{\epsilon}_{y_{t_0}}} = 0$, as $n\to\infty$, with probability 1. 

\end{proof}



\section{Main Result}
\label{sec: general}

In the section, we extend the results of section \ref{sec: OU} to the general setting of section \ref{sec: setting}. More precisely, we consider the fast/slow systems of SDEs described in \eqref{eq:general_model}, where $\left(x^{\epsilon},y^{\epsilon}\right)\in\R\times\R^d$, $V$ is the standard $d$--dimensional Browinan motion. Moreover, we will assume the following 

\begin{ass}
\label{ass: general}
Functions $f(\cdot)$, $g(\cdot)$ and $\beta(\cdot)$ are such that the following hold
\begin{itemize}
\item[1.] Ergodicity: $y^\epsilon$ is an ergodic process with invariant distribution $\rho^\infty$.
\item[2.] Stationarity:  $y^\epsilon_0$ distributed according to the invariant distribution $\rho^\infty$.
\item[3.] The diffusion operator $\mathcal{L}$ corresponding to \eqref{eq:general_fast_variable} for $\epsilon=1$ is essentially self-adjoined, which implies that the transition semigroup will be bounded operators, contracting in $L_2$.
\item[4.] Function $f(\cdot)$ is square-integrable with respect to $\rho^\infty$ and twice differentiable.
\item[5.] Centering condition:
\begin{equation}
\mathbb{E}\left( f(y^\epsilon_0) \right) = \int_{\R^d}f(y)\rho^{\infty}(y)dy=0.\label{eq:cenetring_conditon}
\end{equation}
\end{itemize}
\end{ass}
\noindent Assumptions $3$ and $4$ above are needed, so that the functions on which the semigroup operators act are in the domain of the operators.
Assumption $5$ is necessary for the homogenization limit to exist. Under this assumptions, it is a well-known result (see \cite{pavliotis2008multiscale}) that as $\epsilon\rightarrow0$, the process $x^{\epsilon}$ converges weakly to the process $X$ solving the following SDE
\begin{equation}
dX_t=\sigma dW_t,\qquad X_0=x_0,\label{eq:Homo_general_model}
\end{equation}
where $W$ is a standard Brownian motion. The diffusion coefficient $\sigma$, which is constant for this class of models, is given by
\begin{equation}
\sigma^{2}=2\int_{\R^d}f(y)\Phi(y)\rho^{\infty}(y)dy=2\mathbb{E}\left[f(y^{\epsilon}_0)\Phi(y^{\epsilon}_0)\right].
\end{equation}
Function $\Phi(\cdot)$ above is the solution to the Poisson problem
\begin{eqnarray}
\left(\mathcal{L}\Phi\right)(y) & = & -f(y),\nonumber \\
\int_{\R^d}\Phi(y)\rho^{\infty}(y)dy & = & 0.\label{eq:Poisson_problem}
\end{eqnarray}
Notice that for this particular model, the homogenized SDE \eqref{eq:Homo_general_model} does not contain a drift coefficient. 

As before, our goal is to find an efficient estimator for $\sigma^2$, given $x^\epsilon$. We will prove that the (ExtQV) estimator defined in \ref{subsec: ExtQV} for $(x^\epsilon,y^\epsilon)$ satisfying \eqref{eq:general_model} is an asymptotically unbiased estimator of the diffusion coefficient of the limiting diffusion process $\sigma^2$. More precisely, we will prove the following

\begin{thm}
\label{thm: general_model_thm}
Let $x^{\epsilon}:[0,T]\rightarrow\mathbb{R}$ be a real\textendash valued
path described by \eqref{eq:general_model}. Then, subject to technical assumptions given in \ref{ass: general} and assumption \eqref{ass: lemma_general} of lemma \ref{lemma: general}, the following holds:
\begin{equation}
\lim_{\epsilon\rightarrow0}\lim_{n\rightarrow\infty}\mathbb{E}\left[\left(D_{2}^{\text{Ext}}(x)_{n}\right)\right]=2\mathbb{E}\left[f(y^{\epsilon}_0)\Phi(y^{\epsilon}_0)\right].
\end{equation}
\end{thm}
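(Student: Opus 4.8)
The plan is to follow the architecture of the proof of Theorem \ref{thm: main_theorem_OUmodel} as closely as possible, replacing every place where the explicit Ornstein--Uhlenbeck structure was exploited by a corresponding object coming from the homogenization/Poisson machinery. Starting from the expansion \eqref{eq: ext_QV}, I would first argue that $\lim_{n\to\infty}\mathbb{E}[D_{2}(x^{\epsilon})_{n}]=0$, which again follows from $x^{\epsilon}$ being of bounded variation, so that the entire contribution comes from the double sum of cross terms. Using the stationarity of the increments $\{\Delta x^{\epsilon}_{t_{i}}\}$ --- which holds here because $y^{\epsilon}$ is stationary and ergodic (Assumption \ref{ass: general}, parts 1--2) --- together with a sign-symmetry argument, I would collapse the double sum into a single weighted sum of the form
\[
4\sum_{k=2}^{n}(n+1-k)\,\mathbb{E}\big(\Delta x^{\epsilon}_{t_{1}}\Delta x^{\epsilon}_{t_{k}}\textstyle\prod_{j=1}^{k}\mathbf{1}_{\R_+}(\Delta x^{\epsilon}_{t_{j}})\big),
\]
exactly as in the OU case. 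In the general setting the required sign symmetry is no longer automatic from Gaussianity, and I expect it to rest on the reversibility implied by the essential self-adjointness of $\mathcal{L}$ (Assumption \ref{ass: general}, part 3); this is one place where the hypotheses of Lemma \ref{lemma: general} are likely to enter.

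The single genuinely new ingredient is the replacement of Lemma \ref{lemma: OU}. In place of the explicit solution of the fast equation, I would use the corrector $\Phi$ solving the Poisson problem \eqref{eq:Poisson_problem} and apply It\^o's formula to $\Phi(y^{\epsilon}_{t})$; since $\mathcal{L}\Phi=-f$ and $dx^{\epsilon}_{t}=\tfrac{1}{\epsilon}f(y^{\epsilon}_{t})\,dt$, this yields
\[
\Delta x^{\epsilon}_{t_{k}}=\epsilon\,\big[(I-P^{\epsilon}_{\delta})\Phi\big](y^{\epsilon}_{t_{k-1}})+R_{k},
\]
where $P^{\epsilon}_{\delta}$ is the transition semigroup of $y^{\epsilon}$ over a step $\delta$ and $R_{k}$ is a martingale increment with $\mathbb{E}(R_{k}\mid\mathcal{F}_{t_{k-1}})=0$. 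This is the exact analogue of \eqref{eq: slow_increment_OU} (for the OU model $\Phi(y)=\sigma y$ and $(I-P^{\epsilon}_{\delta})\Phi(y)=\sigma(1-e^{-\delta/\epsilon^{2}})y$, recovering Lemma \ref{lemma: OU}), and I expect it to be the content of Lemma \ref{lemma: general}, with assumption \eqref{ass: lemma_general} supplying the integrability of $\Phi$ and $\beta\Phi'$ under $\rho^{\infty}$ needed to control $R_{k}$. Expanding the product $\Delta x^{\epsilon}_{t_{1}}\Delta x^{\epsilon}_{t_{k}}$ then splits it into a ``correction $\times$ correction'' term and remainder terms containing at least one factor $R$.

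The remainder terms are handled exactly as \eqref{eq: 2nd term OU}--\eqref{eq: 4th term OU}: by Cauchy--Schwarz and the It\^o isometry each such expectation is $\mathcal{O}(\delta^{3/2})$, and since the prefactor $\tfrac{1}{n}\sum_{k=2}^{n}(n+1-k)$ stays bounded while an extra $\delta^{1/2}$ is left over, they vanish as $n\to\infty$; the square-integrability of $\beta\Phi'$ guaranteed by Assumptions \ref{ass: general}(3)--(4) is what makes these isometry bounds uniform. The theorem thus reduces to showing that
\[
4\epsilon^{2}\sum_{k=2}^{n}(n+1-k)\,\mathbb{E}\big((I-P^{\epsilon}_{\delta})\Phi(y^{\epsilon}_{t_{0}})\,(I-P^{\epsilon}_{\delta})\Phi(y^{\epsilon}_{t_{k-1}})\textstyle\prod_{j=1}^{k}\mathbf{1}_{\R_+}(\Delta x^{\epsilon}_{t_{j}})\big)
\]
converges, in the iterated limit, to $2\,\mathbb{E}[f(y^{\epsilon}_{0})\Phi(y^{\epsilon}_{0})]$.

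This last step is where I expect the main difficulty. Since $n\to\infty$ is taken first, $\delta\ll\epsilon^{2}$ eventually, so $(I-P^{\epsilon}_{\delta})\Phi\approx-\tfrac{\delta}{\epsilon^{2}}\mathcal{L}\Phi=\tfrac{\delta}{\epsilon^{2}}f$; substituting this turns the prefactor and sum into $\tfrac{4\delta^{2}}{\epsilon^{2}}\sum_{k}(n+1-k)\,\mathbb{E}(f(y^{\epsilon}_{t_{0}})f(y^{\epsilon}_{t_{k-1}})\cdots)$, and the Riemann-sum/decay structure should identify the limit with $2\int_{0}^{\infty}\mathbb{E}[f(y^{\epsilon}_{0})(P_{t}f)(y^{\epsilon}_{0})]\,dt=2\,\mathbb{E}[f(y^{\epsilon}_{0})\Phi(y^{\epsilon}_{0})]$, using $\Phi=-\mathcal{L}^{-1}f=\int_{0}^{\infty}P_{t}f\,dt$ and the centering condition, with the factor $2$ coming once more from the positive/negative run symmetry. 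To make this rigorous I would introduce, as in \eqref{first 0 crossing}, the first index at which the sign of the increment changes and decompose the indicator product accordingly. The obstacle is that the clean scalar stopping-time argument of the OU case --- which relied on the one-dimensional fast process actually hitting zero, so that $y^{\epsilon}_{\tau}\to0$ --- has no direct analogue when $y^{\epsilon}\in\R^{d}$; here the relevant crossing is that of the scalar quantity $(I-P^{\epsilon}_{\delta})\Phi(y^{\epsilon})$ (equivalently, of $f(y^{\epsilon})$ to leading order), and controlling the overshoot at this crossing, together with justifying the approximation of $(I-P^{\epsilon}_{\delta})\Phi$ by $\tfrac{\delta}{\epsilon^{2}}f$ uniformly enough to pass to the limit, is the technical heart of the argument and, I expect, precisely what assumption \eqref{ass: lemma_general} is designed to secure.
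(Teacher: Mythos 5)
There is a genuine gap in your reduction step. You collapse the cross terms to $4\sum_{k=2}^{n}(n+1-k)\,\mathbb{E}\bigl(\Delta x^{\epsilon}_{t_{1}}\Delta x^{\epsilon}_{t_{k}}\prod_{j=1}^{k}\mathbf{1}_{\R_+}(\Delta x^{\epsilon}_{t_{j}})\bigr)$ ``exactly as in the OU case'', i.e.\ you split the event ``all increments share a sign'' into two equal halves. In the toy model this is justified because the increments are mean-zero Gaussian (Corollary \ref{cor: DX OU}), hence sign-symmetric. For general $f$ this fails: to leading order $\Delta x^{\epsilon}_{t_{j}}\approx \frac{\delta}{\epsilon}f(y^{\epsilon}_{t_{j-1}})$, and the law of $f(y^{\epsilon})$ under $\rho^{\infty}$ need not be symmetric about $0$ (take $f(y)=1-y^{2}$ as in Example \ref{exa: molel_example_2}: all-positive runs and all-negative runs have very different probabilities), so the all-positive contribution is not half of the same-sign contribution and your factor $4$ produces the wrong constant. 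Your hope that reversibility (essential self-adjointness of $\mathcal{L}$) restores the symmetry does not work either: reversibility of $y^{\epsilon}$ says nothing about invariance of the law of $f(y^{\epsilon})$ under sign flip. The paper states this explicitly and keeps the factor $2$ together with the indicator on \emph{products of consecutive increments}, $2\sum_{k=2}^{n}(n+1-k)\,\mathbb{E}\bigl(\Delta x^{\epsilon}_{t_{1}}\Delta x^{\epsilon}_{t_{k}}\prod_{j=1}^{k-1}\mathbf{1}_{\R_+}(\Delta x^{\epsilon}_{t_{j}}\Delta x^{\epsilon}_{t_{j+1}})\bigr)$ as in \eqref{eq: reduction1 general}, with the stopping time \eqref{eq: tau_general} defined as the first sign \emph{change} of $f(y^{\epsilon}_{t_{k}})$; no symmetry is ever invoked. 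The fix to your argument is exactly this: never split the same-sign event.

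On the other differences: your guessed content of Lemma \ref{lemma: general} is not what the paper proves. The lemma is the elementary expansion $\Delta x^{\epsilon}_{t_{i}}=\frac{\delta}{\epsilon}f(y^{\epsilon}_{t_{i-1}})+R_{t_{i-1},t_i}$ with $\mathbb{E}|R_{t_{i-1},t_i}|\leq C\delta^{3/2}$, obtained by applying It\^o's formula to $f$ (not to the corrector $\Phi$); assumption \eqref{ass: lemma_general} is just integrability of $\nabla f^{*}g$ and of the trace term, and no regularity of $\Phi$ is required. The corrector enters only at the very end, where the paper identifies the limit of the weighted semigroup sums $\Psi=\lim_{\epsilon\to 0}\lim_{n\to\infty}\sum_{k}\frac{n+1-k}{n^{2}\epsilon^{2}}\,e^{-\mathcal{L}(k-1)/(n\epsilon^{2})}f$ with the solution of the Poisson problem \eqref{eq:Poisson_problem} by an operator power-series computation showing $\mathcal{L}^{2}\Psi=-\mathcal{L}f$ plus uniqueness; your alternative identification via $\Phi=\int_{0}^{\infty}P_{t}f\,dt$ and a Riemann-sum limit is a legitimate and arguably cleaner route to the same conclusion. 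Finally, the ``overshoot'' difficulty you anticipate for $y^{\epsilon}\in\R^{d}$ does not arise in the paper's formulation: because the whole argument is phrased in terms of the scalar process $f(y^{\epsilon}_{t})$, almost-sure continuity of its paths gives $f(y^{\epsilon}_{\tau^{\delta,\epsilon}})\to f(y^{\epsilon}_{\tau^{\epsilon}})=0$ as $\delta\to 0$, exactly as in the OU case.
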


First, we prove the following

\begin{lem}
\label{lemma: general}
Suppose that $(x^\epsilon,y^\epsilon)$ satisfy \eqref{eq:general_model}. As before, let ${\mathcal D}_n = \{k\delta, k=0,\dots,n\}$ be the homogeneous grid of $[0,T]$, for $T=1$ and $\delta = \frac{1}{n}$ . Then, we can write
\[ \Delta x^\epsilon_{t_i} := x^\epsilon_{t_i} - x^\epsilon_{t_{i-1}} = \frac{\delta}{\epsilon} f(y^\epsilon_{t_{i-1}}) + R_{t_{i-1},t_i}(y^\epsilon),\]
where
\[ {\mathbb E}\left| R^\epsilon_{t_{i-1},t_i}(y^\epsilon)\right | \leq C \delta^{\frac{3}{2}}, \]
for some constant $C>0$ depending on $f,g,\beta$ and $\epsilon$, assuming that
\begin{equation}
\label{ass: lemma_general}
{\mathbb E}\left| A(y^\epsilon_t) \right| \leq C_A\ \ {\rm and}\ \ {\mathbb E}\left( B(y^\epsilon_t)^2 \right) \leq C_B
\end{equation}
uniformly on $t\in[0,1]$, where $A:\R^d\to\R$ and $B:\R^d\to\R$ are given by
\[ A(y) = \triangledown f(y)^* g(y) + \frac{1}{2}{\rm Tr}\left( \beta(y)^* H(f)(y) \beta(y) \right)\]
and
\[ B(y) = \triangledown f(y)^* g(y),\]
for ${z^*}$ denoting the transpose of any vector $z\in\R^d$, ${\rm Tr}\left(\cdot\right)$ denoting the trace of a matrix and $H(f)$ denoting the Hessian of the function $f:\R^d\to\R$.
\end{lem}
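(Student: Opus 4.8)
The plan is to work directly from the integral form of the slow equation \eqref{eq:general_slow_variable}, namely $\Delta x^\epsilon_{t_i} = \frac{1}{\epsilon}\int_{t_{i-1}}^{t_i} f(y^\epsilon_s)\,ds$, and to extract the leading term by freezing the integrand at the left endpoint $t_{i-1}$. Writing $f(y^\epsilon_s) = f(y^\epsilon_{t_{i-1}}) + \left(f(y^\epsilon_s)-f(y^\epsilon_{t_{i-1}})\right)$ and integrating, the frozen part contributes exactly $\frac{\delta}{\epsilon} f(y^\epsilon_{t_{i-1}})$, so that
\[ R^\epsilon_{t_{i-1},t_i} = \frac{1}{\epsilon}\int_{t_{i-1}}^{t_i}\left(f(y^\epsilon_s) - f(y^\epsilon_{t_{i-1}})\right)ds. \]
Everything then reduces to controlling the $L^1$-size of the fluctuation of $f(y^\epsilon)$ over a window of length $\delta$.

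To quantify that fluctuation I would apply It\^o's formula to $f(y^\epsilon_s)$ driven by the fast dynamics \eqref{eq:general_fast_variable}. Since $d\langle y^\epsilon\rangle_s = \frac{1}{\epsilon^2}\beta\beta^*(y^\epsilon_s)\,ds$, the second-order term produces precisely $\frac{1}{2}{\rm Tr}(\beta^* H(f)\beta)$ (using cyclicity of the trace), so the drift of $f(y^\epsilon)$ equals $\frac{1}{\epsilon^2}A(y^\epsilon_s)$ while the martingale part is $\frac{1}{\epsilon}(\triangledown f^*\beta)(y^\epsilon_s)\,dV_s$. Substituting this expansion of $f(y^\epsilon_s)-f(y^\epsilon_{t_{i-1}})$ back into the remainder and separating the two contributions gives
\[ R^\epsilon_{t_{i-1},t_i} = \frac{1}{\epsilon^3}\int_{t_{i-1}}^{t_i}\!\int_{t_{i-1}}^{s}A(y^\epsilon_r)\,dr\,ds + \frac{1}{\epsilon^2}\int_{t_{i-1}}^{t_i}\!\int_{t_{i-1}}^{s}(\triangledown f^*\beta)(y^\epsilon_r)\,dV_r\,ds. \]

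The first term is the easy one: bringing $\mathbb{E}\left|\cdot\right|$ inside both integrals and using the uniform bound $\mathbb{E}\left|A(y^\epsilon_r)\right|\le C_A$ from \eqref{ass: lemma_general} gives a contribution of order $\frac{C_A}{2\epsilon^3}\delta^2$. The second, martingale term is the real crux and is where the exponent $\tfrac{3}{2}$ originates. For it I would bring the expectation inside the outer $ds$-integral (Fubini--Tonelli) and, for each fixed $s$, estimate the inner It\^o integral $M_s := \int_{t_{i-1}}^{s}(\triangledown f^*\beta)(y^\epsilon_r)\,dV_r$ by Jensen and It\^o's isometry,
\[ \mathbb{E}\left|M_s\right| \le \left(\mathbb{E}\left|M_s\right|^2\right)^{1/2} = \left(\int_{t_{i-1}}^{s}\mathbb{E}\,\left|(\triangledown f^*\beta)(y^\epsilon_r)\right|^2\,dr\right)^{1/2}\le \sqrt{C_B}\,(s-t_{i-1})^{1/2}, \]
the relevant integrand being the It\^o diffusion coefficient $\triangledown f^*\beta$ of $f(y^\epsilon)$, whose uniform second moment is the quantity controlled by \eqref{ass: lemma_general}. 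Integrating $(s-t_{i-1})^{1/2}$ over $[t_{i-1},t_i]$ produces $\frac{2}{3}\delta^{3/2}$, so this term is of order $\frac{2\sqrt{C_B}}{3\epsilon^2}\delta^{3/2}$.

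Combining the two estimates and using $\delta\le 1$ (so that $\delta^2\le\delta^{3/2}$) yields $\mathbb{E}\left|R^\epsilon_{t_{i-1},t_i}\right|\le C\delta^{3/2}$ with $C = C(\epsilon,C_A,C_B)$, as claimed; here $C$ is permitted to blow up as $\epsilon\to0$, which is harmless because in Theorem \ref{thm: general_model_thm} the limit $n\to\infty$ (equivalently $\delta\to0$) is taken before $\epsilon\to0$. The only delicate point is the martingale estimate: the rate $\delta^{3/2}$ is the product of a $\delta^{1/2}$ coming from the $L^2$-growth of the inner stochastic integral and a full $\delta$ from the outer time integration, and it is this term, rather than the smoother $\mathcal{O}(\delta^2)$ drift term, that dictates the order. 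The moment conditions in \eqref{ass: lemma_general} serve precisely to make both bounds uniform in $i$ (equivalently, in $t\in[0,1]$), so that the constant $C$ can be taken independent of the grid.
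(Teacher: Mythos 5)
Your proposal is correct and takes essentially the same route as the paper: freeze $f$ at the left endpoint, expand the remainder via It\^o's formula into a drift part (bounded through $C_A$ by ${\mathcal O}(\delta^{2}/\epsilon^{3})$) and a martingale part (bounded through It\^o's isometry and $C_B$ by ${\mathcal O}(\delta^{3/2}/\epsilon^{2})$), the latter dictating the rate, exactly the computation the paper compresses into ``the result follows from \eqref{ass: lemma_general} and It\^o's isometry.'' You also, correctly, take the martingale coefficient to be $\triangledown f^{*}\beta$ rather than the $\triangledown f^{*}g$ appearing in the lemma's definition of $B$ --- an apparent typo in the paper, since It\^o's formula yields $\triangledown f^{*}\beta$ as the diffusion coefficient of $f(y^{\epsilon})$.
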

\begin{proof}
First, we write
\begin{eqnarray*}
\Delta x^\epsilon_{t_i} &=& \int_{t_{i-1}}^{t_i} dx^\epsilon_u = \int_{t_{i-1}}^{t_i} \frac{1}{\epsilon} f(y^\epsilon_u) du =  \int_{t_{i-1}}^{t_i} \frac{1}{\epsilon} \left( f(y^\epsilon_{t_i}) + \int_{t_{i-1}}^u df(y^\epsilon_s) \right) du \\
&=& \frac{\delta}{\epsilon} f(y^\epsilon_{t_{i-1}}) + R_{t_{i-1},t_i}(y^\epsilon),
\end{eqnarray*}
where
\[ R_{t_{i-1},t_i}(y^\epsilon) = \frac{1}{\epsilon}\int_{t_{i-1}}^{t_i} \int_{t_{i-1}}^u df(y^\epsilon_s) du
\]
Using It\^o's formula for $df(y^\epsilon_s)$, $R_{t_{i-1},t_i}(y^\epsilon)$ can be written as
\[ R_{t_{i-1},t_i} =\frac{1}{\epsilon}\int_{t_{i-1}}^{t_i} \int_{t_{i-1}}^u \left(\frac{1}{\epsilon^2}A(y^\epsilon_s) ds + \frac{1}{\epsilon} B(y^\epsilon_s) dV_s \right).
\]
The result follows from \eqref{ass: lemma_general} and It\^o's isometry.
\end{proof}

Now, we are ready to prove the theorem, following the same steps as the proof of theorem \ref{thm: main_theorem_OUmodel}. 
\begin{proof}
Following exactly the same arguments as in theorem \ref{thm: main_theorem_OUmodel}, we can show that it is sufficient to prove that 
\begin{equation}
\label{eq: reduction1 general} 
\lim_{\epsilon\rightarrow0}\lim_{n\rightarrow\infty}2\sum_{k=2}^{n}(n+1-k) \mathbb{E}\left( \Delta x^{\epsilon}_{t_{1}}\Delta x^{\epsilon}_{t_{k}}\prod_{j=1}^{k-1}\mathbf{1}_{\R_+}\left(\Delta x^{\epsilon}_{t_{j}}\Delta x^{\epsilon}_{t_{j+1}}\right)\right)
= 2\mathbb{E}\left[f(y^{\epsilon}_0)\Phi(y^{\epsilon}_0)\right].
\end{equation}
Note that, while we can still assume stationarity of increments, we cannot assume symmetry, which is why we now need to consider both cases of all positive or all negative increments. Again, using the same arguments as in theorem \ref{thm: main_theorem_OUmodel} in conjunction with lemma \ref{lemma: general}, \eqref{eq: reduction1 general} can be further reduced to 
\begin{equation}
\label{eq: reduction2 general} 
\lim_{\epsilon\rightarrow0}\lim_{n\rightarrow\infty}\sum_{k=2}^{n}\frac{n+1-k}{n^2\epsilon^2} \mathbb{E}\left( f(y^{\epsilon}_{t_{0}})f(y^{\epsilon}_{t_{k-1}})\prod_{j=1}^{k-1}\mathbf{1}_{\R_+}\left(f(y^{\epsilon}_{t_{j-1}})f(y^{\epsilon}_{t_{j}})\right)\right)
= \mathbb{E}\left[f(y^{\epsilon}_0)\Phi(y^{\epsilon}_0)\right].
\end{equation}
Let $\tau^{\delta,\epsilon}$ be the first time that the discretised process $\{ f(y^\epsilon_{t_k}); k\in\N \}$ changes sign, i.e.
\begin{equation}
\label{eq: tau_general}
\tau^{\delta,\epsilon} = \min\{ k\in\N; f(y^\epsilon_{t_0})f(y^\epsilon_{t_k}) \leq0\}.
\end{equation}
Then, the event of ``all $\{f(y^\epsilon_{t_i}), i=0,\dots,k-1\}$ have the same sign'' is the same as the event ``time when process $f(y^\epsilon_{t_i})$ changes sign is greater or equal to $t_k$'', i.e.
\[ \prod_{j=1}^{k-1}\mathbf{1}_{\R_+}\left(f(y^{\epsilon}_{t_{j-1}})f(y^{\epsilon}_{t_{j}})\right) = \mathbf{1}_{[t_k,+\infty)}(\tau^{\delta, \epsilon}).\]
Thus, we can write
\begin{eqnarray}
\label{eq: stopping time in expectation}
\nonumber\mathbb{E}\left( f(y^{\epsilon}_{t_{0}})f(y^{\epsilon}_{t_{k-1}})\prod_{j=1}^{k-1}\mathbf{1}_{\R_+}\left(f(y^{\epsilon}_{t_{j-1}})f(y^{\epsilon}_{t_{j}})\right)\right) = \mathbb{E}\left( f(y^{\epsilon}_{t_{0}})f(y^{\epsilon}_{t_{k-1}})\mathbf{1}_{[t_k,+\infty)}(\tau^{\delta, \epsilon})\right)\\ 
=  \mathbb{E}\left( f(y^{\epsilon}_{t_{0}})f(y^{\epsilon}_{t_{k-1}})\right) - \mathbb{E}\left( f(y^{\epsilon}_{t_{0}})f(y^{\epsilon}_{t_{k-1}})\mathbf{1}_{[0,t_k)}(\tau^{\delta, \epsilon})\right).
\end{eqnarray}
By replacing \eqref{eq: stopping time in expectation} into \eqref{eq: reduction2 general}, the proof is further reduced to proving 
\begin{equation}
\label{eq: reduction3_general}
\lim_{\epsilon\rightarrow0}\lim_{n\rightarrow\infty}\sum_{k=2}^{n}\frac{n+1-k}{n^2\epsilon^2} \mathbb{E}\left( f(y^{\epsilon}_{t_{0}})f(y^{\epsilon}_{t_{k-1}})\right) = \mathbb{E}\left[f(y^{\epsilon}_0)\Phi(y^{\epsilon}_0)\right]
\end{equation}
and
\begin{equation}
\label{eq: reduction4_general}
\lim_{\epsilon\rightarrow0}\lim_{n\rightarrow\infty}\sum_{k=2}^{n}\frac{n+1-k}{n^2\epsilon^2} \mathbb{E}\left( f(y^{\epsilon}_{t_{0}})f(y^{\epsilon}_{t_{k-1}})\mathbf{1}_{[0,t_k)}(\tau^{\delta, \epsilon})\right) = 0.
\end{equation}
To complete the proof, we need to write the conditional expectation in terms of the solution of the backward Kolmogorov equation expressed in terms of the semi-group generated by the diffusion operator ${\mathcal L}$ corresponding to \eqref{eq:general_fast_variable}, scaled to $\epsilon = 1$, i.e. 
\begin{equation}
\label{eq: conditional exp_general}
{\mathbb E}\left( f(y^\epsilon_t )|y^\epsilon_s\right) = (e^\frac{-{\mathcal L}(t-s)}{\epsilon^2}f)(y^\epsilon_s).
\end{equation}
This is well defined, based on assumptions 3 and 4 of \eqref{ass: general}. To prove \eqref{eq: reduction4_general}, we first note that the above formula will still hold when we condition on a stopping time, or, more precisely, 
\[ 
{\mathbb E}\left( f(y^\epsilon_t)|y^\epsilon_{\tau^{\delta, \epsilon}}\right) = (e^\frac{-{\mathcal L}(t-s)}{\epsilon^2}f)(y^\epsilon_{\tau^{\delta,\epsilon}}).
\]
We also note that the semigroup is a contraction in $L_2$, i.e.
\[ 
{\mathbb E}\left( f(y^\epsilon_t)^2 \right) \leq {\mathbb E}\left( f(y^\epsilon_{\tau^{\delta,\epsilon}})^2 \right),
\]
for $\tau^{\delta,\epsilon}<t$. Thus,
\begin{eqnarray*}
\left| \mathbb{E}\left( f(y^{\epsilon}_{t_{0}})\mathbf{1}_{[0,t_{k}))}(\tau^{\delta, \epsilon})f(y^{\epsilon}_{t_{k-1}})\right) \right|&\leq& \mathbb{E}\left( |f(y^{\epsilon}_{t_{0}})| \cdot |{\mathbb E}\left( f(y^{\epsilon}_{t_{k-1}}) | \tau^{\delta,\epsilon}\right)|\right) \\
& \leq & \mathbb{E}\left( | f(y^{\epsilon}_{t_{0}}) | \cdot |f(y^{\epsilon}_{\tau^{\delta,\epsilon}})|\right) \\
&\leq& \mathbb{E}\left( f(y^{\epsilon}_{t_{0}})^2\right)^\frac{1}{2} \mathbb{E}\left( f(y^{\epsilon}_{\tau^{\delta,\epsilon}})^2\right)^\frac{1}{2},
\end{eqnarray*}
and, consequently, 
\begin{equation*}
\left| \sum_{k=2}^{n}\frac{n+1-k}{n^2\epsilon^2} \mathbb{E}\left( f(y^{\epsilon}_{t_{0}})f(y^{\epsilon}_{t_{k-1}})\mathbf{1}_{[0,t_k)}(\tau^{\delta, \epsilon})\right)\right| \leq \left( \sum_{k=2}^{n}\frac{n+1-k}{n^2\epsilon^2}\right)\mathbb{E}\left( f(y^{\epsilon}_{t_{0}})^2\right)^\frac{1}{2} \mathbb{E}\left( f(y^{\epsilon}_{\tau^{\delta,\epsilon}})^2\right)^\frac{1}{2}
\end{equation*}
It is easy to see that
\[ 
\lim_{n\to\infty} \left( \sum_{k=2}^{n}\frac{n+1-k}{n^2\epsilon^2}\right)\mathbb{E}\left( f(y^{\epsilon}_{t_{0}})^2\right)^\frac{1}{2} = C(\epsilon)<\infty,
\]
and thus it remains to show that
\[
\lim_{n\to\infty} \mathbb{E}\left( f(y^{\epsilon}_{\tau^{\delta,\epsilon}})^2\right) = 0.
\]
This follows from the almost sure continuity of the paths $f(y_t)$, which implies that $\tau^{\delta,\epsilon}\to \tau^\epsilon$ with probability 1 as $\delta\to 0$ (or $n\to\infty$), where $\tau^\epsilon = \min\{t>0: f(y_t) = 0\}$. Continuity also implies that $f(y_{\tau^{\delta,\epsilon}})\to f(y_{\tau^\epsilon})$ with probability 1 as $\delta\to 0$, and thus
\[ 
\lim_{n\to\infty}\mathbb{E}\left( f(y^{\epsilon}_{\tau^{\delta,\epsilon}})^2\right) = \mathbb{E}\left( f(y^{\epsilon}_{\tau^{\epsilon}})^2\right) = 0,
\]
since $f(y^{\epsilon}_{\tau^{\epsilon}}) = 0$, by the definition of $\tau^\epsilon$. 

Finally, it remains to show \eqref{eq: reduction3_general}. Using \eqref{eq: conditional exp_general}, we write
\[ 
\mathbb{E}\left( f(y^{\epsilon}_{t_{0}})f(y^{\epsilon}_{t_{k-1}})\right)  = \mathbb{E}\left( f(y^{\epsilon}_{t_{0}})\mathbb{E}\left(f(y^{\epsilon}_{t_{k-1}})| y_{t_0}\right)\right) = \mathbb{E}\left( f(y^{\epsilon}_{t_{0}})(e^\frac{-\mathcal{L}(k-1)}{n\epsilon^2}f)(y^{\epsilon}_{0})\right), 
\]
Using the dominated convergence theorem, we can write the left-hand-side of \eqref{eq: reduction3_general} as
\begin{eqnarray*}
\lim_{\epsilon\rightarrow0}\lim_{n\rightarrow\infty}\sum_{k=2}^{n}\frac{n+1-k}{n^2\epsilon^2} \mathbb{E}\left( f(y^{\epsilon}_{t_{0}})(e^\frac{-\mathcal{L}(k-1)}{n\epsilon^2}f)(y^{\epsilon}_{0})\right) = \\
\ \ = \mathbb{E}\left( f(y^{\epsilon}_{t_{0}})\lim_{\epsilon\rightarrow0}\lim_{n\rightarrow\infty}\sum_{k=2}^{n}\frac{n+1-k}{n^2\epsilon^2} (e^\frac{-\mathcal{L}(k-1)}{n\epsilon^2}f)(y^{\epsilon}_{0})\right).
\end{eqnarray*}
Let $\Psi:\R^d\to\R$ be defined as
\[ 
\Psi(y) = \lim_{\epsilon\rightarrow0}\lim_{n\rightarrow\infty}\sum_{k=2}^{n}\frac{n+1-k}{n^2\epsilon^2} (e^\frac{-\mathcal{L}(k-1)}{n\epsilon^2}f)(y) = \lim_{\epsilon\rightarrow0}\lim_{n\rightarrow\infty}\sum_{k=1}^{n-1}\frac{n-k}{n^2\epsilon^2} (e^\frac{-\mathcal{L}k}{n\epsilon^2}f)(y)
\]
If we can show that $\Psi$ solves the Poisson problem \eqref{eq:Poisson_problem}, then the proof would be complete. First, we note that 
\begin{eqnarray*}
\mathbb{E}\left( \Psi(y_0)\right)&=& \mathbb{E}\left( \lim_{\epsilon\rightarrow0}\lim_{n\rightarrow\infty}\sum_{k=1}^{n-1}\frac{n-k}{n^2\epsilon^2} (e^\frac{-\mathcal{L}k}{n\epsilon^2}f)(y_0)\right) \\
&=&  \lim_{\epsilon\rightarrow0}\lim_{n\rightarrow\infty}\sum_{k=1}^{n-1}\frac{n-k}{n^2\epsilon^2}\mathbb{E}\left( (e^\frac{-\mathcal{L}k}{n\epsilon^2}f)(y_0)\right) \\
&=& \lim_{\epsilon\rightarrow0}\lim_{n\rightarrow\infty}\sum_{k=1}^{n-1}\frac{n-k}{n^2\epsilon^2}\mathbb{E}\left( f(y^\epsilon_{t_k}) \right) \\
&=& \lim_{\epsilon\rightarrow0}\lim_{n\rightarrow\infty}\sum_{k=1}^{n-1}\frac{n-k}{n^2\epsilon^2}\mathbb{E}\left( f(y^\epsilon_{0}) \right) = 0,
\end{eqnarray*}
where we used the dominated convergence theorem to go from the first line to the second, the tower property and \eqref{eq: conditional exp_general} (for $s=0$, $t = t_k$) to go from second line to the third and stationarity to go from the third line to the fourth. Finally, we write
\begin{eqnarray*}
\Psi &=&  \lim_{\epsilon\rightarrow0}\lim_{n\rightarrow\infty}\sum_{k=1}^{n-1}\frac{n-k}{n^2\epsilon^2} e^\frac{-\mathcal{L}k}{n\epsilon^2}f  \\
&=& \lim_{\epsilon\rightarrow0}\lim_{n\rightarrow\infty}\sum_{k=1}^{n-1}\frac{n-k}{n^2\epsilon^2}\sum_{m=0}^\infty \frac{1}{m!}(\frac{k}{n\epsilon^2})^m (\mathcal{-L})^m f \\
&=& \lim_{\epsilon\rightarrow0}\sum_{m=0}^\infty \frac{1}{m!}\left( \lim_{n\rightarrow\infty}\sum_{k=1}^{n-1}\frac{n-k}{n^2\epsilon^2} (\frac{k}{n\epsilon^2})^m\right) (\mathcal{-L})^m f \\
&=&  \lim_{\epsilon\rightarrow0}\sum_{m=0}^\infty \frac{1}{m!} \frac{1}{(m+1)(m+2)\epsilon^{2(m+1)}} (\mathcal{-L})^m f \\
&=&  \lim_{\epsilon\rightarrow0}\epsilon^2 \sum_{m=0}^\infty \frac{1}{(m+2)!} \left(\frac{1}{\epsilon^2}\right)^{(m+2)} (\mathcal{-L})^m f,
\end{eqnarray*}
where we used the definition of the exponential of an operator to go from the first line to the second, the fact that limits exists to go from the second line to the third and the appropriate limit identity to go from the third line to the fourth. Finally,
\begin{eqnarray*}
\mathcal{L}^2 \Psi &=&  \mathcal{L}^2 \left( \lim_{\epsilon\rightarrow0}\epsilon^2 \sum_{m=0}^\infty \frac{1}{(m+2)!} \left(\frac{1}{\epsilon^2}\right)^{(m+2)} (\mathcal{-L})^m \right) f \\
&=& \lim_{\epsilon\rightarrow0}\epsilon^2 \sum_{m=0}^\infty \frac{1}{(m+2)!} \left(\frac{1}{\epsilon^2}\right)^{(m+2)} (\mathcal{-L})^{(m+2)} f \\
&=& \lim_{\epsilon\rightarrow0}\epsilon^2 \sum_{m=2}^\infty \frac{1}{m!} \left(\frac{1}{\epsilon^2}\right)^{m} (\mathcal{-L})^{m} f \\
&=& \lim_{\epsilon\rightarrow0}\epsilon^2 \left( \sum_{m=0}^\infty \frac{1}{m!} \left(-\frac{1}{\epsilon^2}\right)^{m} \mathcal{L}^{m} - I - \frac{1}{\epsilon^2} \mathcal{L}\right) f \\
&=& \lim_{\epsilon\rightarrow0}\epsilon^2 \left( e^\frac{\mathcal{-L}}{\epsilon^2}- I\right)f  - \mathcal{L} f,
\end{eqnarray*}
where we used the continuity of the operator, which allows us to apply it before the limits. Using the contraction property, we can show that
\[ \lim_{\epsilon\rightarrow0}\epsilon^2 \left( e^\frac{\mathcal{-L}}{\epsilon^2}- I\right) f = 0\]
in $L_2$. Thus, 
\[ \mathcal{L}^2 \Psi = -\mathcal{L}f,\]
which is also satisfied by $\Phi$. We conclude that $\Psi = \Phi$, as solution is unique.
\end{proof}

\section{Numerical Results}
\label{sec: numerical_results}

In this section we present numerical results for the performance of the (ExtQV) estimator when it is applied to different examples of multiscale models.

\begin{example}
\label{ex: simple_OU}
We start our numerical study  from the the toy example that was introduced in \eqref{subeq: multiOU}. Initially, we present numerical evidence supporting the unbiasedness of our proposed estimator.  We also examine how the choice of the parameter $\epsilon$, the step size $\delta=T/n$ and the value of $\sigma$ affects the accuracy of the (ExtQV). Unless stated otherwise, $T=1$. 

We generate 1000 realisations of the path $x^{\epsilon}$ with step $\delta = \frac{1}{n}$, using the Euler--Maruyama scheme. For each realisation we evaluate the (ExtQV). We approximate the expectation by the average of these values. 

Table \ref{tab:ExtQV-vs} presents the values of the expectation of the (ExtQV) for $\epsilon=(0.05,0.10,0.15,0.20)$, $n=\left(10^{3},10^{4},10^{5},10^{6},10^{7}\right)$ and for $\sigma=1$. The last line of the table corresponds to  the theoretical value of the (ExtQV) as $n\rightarrow\infty$ which is given by
 \[
 \lim_{n\rightarrow\infty}\mathbb{E}\left[D_{2}^{\text{Ext}}(x_{n}^{\epsilon})_{T}^{2}\right]=\sigma^{2}\left(1+\epsilon^{2}\left(1-e^{-1/\epsilon^{2}}\right)\right).
 \]
\begin{table}[H]
\centering%
\begin{tabular}{|c||c|c|c|c|}
\hline
\multirow{2}{*}{$\mathbb{E}\left[D_{2}^{\text{Ext}}(x^{\epsilon})_{n}^{2}\right]$} & \multicolumn{4}{c|}{$\epsilon$}\tabularnewline
\cline{2-5}
 & 0.05 & 0.10 & 0.15 & 0.20\tabularnewline
\hline
\hline
$n=10^{3}$ & 1.4971 & 1.1956 & 1.0706 & 1.0556\tabularnewline
\hline
$n=10^{4}$ & 1.1317 & 1.0569 & 1.0327 & 0.9639\tabularnewline
\hline
$n=10^{5}$ & 1.0400 & 0.9997 & 1.0003 & 0.9682\tabularnewline
\hline
$n=10^{6}$ & 1.0085 & 0.9861 & 0.9599 & 0.9447\tabularnewline
\hline
$n=10^{7}$ & 0.9908 & 0.9904 & 0.9665 & 0.9515\tabularnewline
\hline
Theoretical Value & 0.9975 & 0.9900 & 0.9775 & 0.9600\tabularnewline\hline
\end{tabular}\caption[Expectation of the (ExtQV) for the model \eqref{subeq: multi}. Investigation of its behaviour for different $n$'s and $\epsilon$'s and for $\sigma=1$.]{\label{tab:ExtQV-vs}Expectation of the (ExtQV) for different $n$'s
and $\epsilon$'s and for $\sigma=1$.}
\end{table}

Furthermore, we examine the squared $L_2$ error of the (ExtQV), i.e.
\[
\mathbb{E}\left[\left(\mathbb{E}\left(D_{2}^{\text{Ext}}(x_{n}^{\epsilon})_{T}\right)^{2}-\sigma^{2}\right)^{2}\right].
\]
Table \ref{tab:L2-vs-n_e} shows the squared $L_{2}$--error for different $n$'s, $\epsilon$'s and for fixed $\sigma=1$. These numerical results indicate that the squared $L_2$ error of our estimator is of order ${\mathcal O}(\epsilon^{2})$. This can be seen more clearly in the log--log plot in figure \ref{fig: log_log_L2}. 
 
\begin{table}[H]
\centering%
\begin{tabular}{|c||c|c|c|c|}
\hline
\multirow{2}{*}{$\mathbb{E}\left[\left(D_{2}^{\text{Ext}}(x_{n}^{\epsilon})_{T}^{2}-\sigma^{2}\right)^{2}\right]$} & \multicolumn{4}{c|}{$\epsilon$}\tabularnewline
\cline{2-5}
 & 0.05 & 0.10 & 0.15 & 0.20\tabularnewline
\hline
\hline
$n=10^{3}$ & 0.2985 & 0.1785 & 0.1792 & 0.3638\tabularnewline
\hline
$n=10^{4}$ & 0.0476 & 0.1250 & 0.2650 & 0.3548\tabularnewline
\hline
$n=10^{5}$ & 0.0306 & 0.1154 & 0.2380 & 0.3800\tabularnewline
\hline
$n=10^{6}$ & 0.0273 & 0.10 & 0.1986 & 0.3874\tabularnewline
\hline
$n=10^{7}$ & 0.0261 & 0.1008 & 0.1992 & 0.3824\tabularnewline
\hline
\end{tabular}\caption[L$_{2}$\textendash error of the (ExtQV) for data generated by the model  \eqref{subeq: multi}. Investigation of its behavior for
different $n$'s and $\epsilon$'s and for $\sigma=1$.]{\label{tab:L2-vs-n_e}L$_{2}$\textendash error of the (ExtQV) for
different $n$'s and $\epsilon$'s and for $\sigma=1$.}
\end{table}

\begin{figure}[H]
  \centering
  \includegraphics[scale=1]{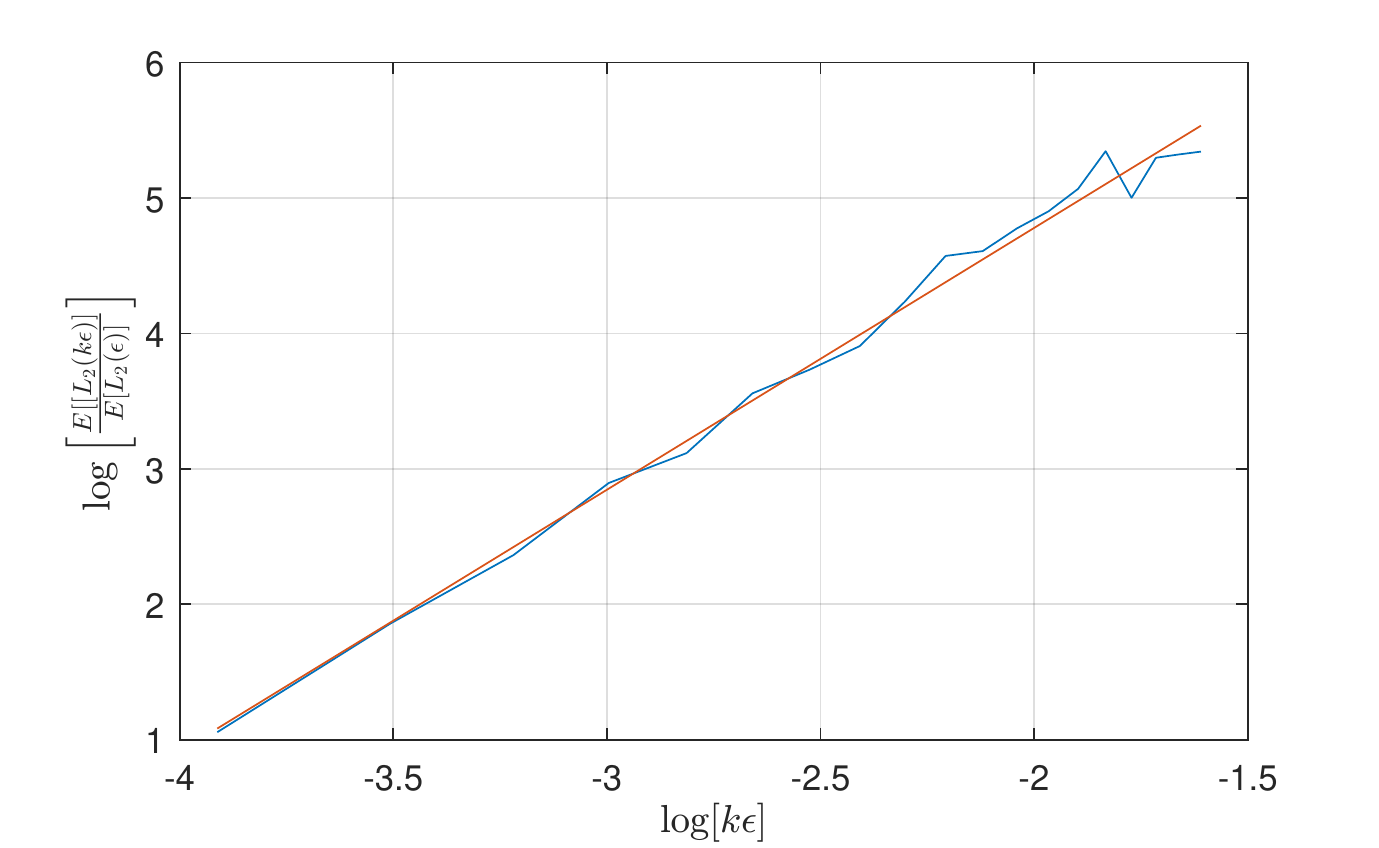}
  \caption[Investigation of the $L_{2}$--error behaviour with respect to $\epsilon$ through a log--log plot]{Log--log plot between the ration of the $L_{2}$--error corresponding to $k\epsilon$ and $ L_{2}$--error corresponding to $\epsilon$ with respect to $\log(k\epsilon)$.}\label{fig: log_log_L2}
\end{figure}

\begin{example}\label{eq: example_y_3}
Consider the following fast/slow system of SDEs
\begin{subequations}
\begin{align}
dx & =\frac{\sigma}{\epsilon}y^{3}dt,\qquad & x(0)=x_{0},\label{eq:examplel_slow_variable}\\
dy & =-\frac{y}{\epsilon^{2}}dt+\frac{\sqrt{2}}{\epsilon}dV,\qquad & y(0)=y_{0},\label{eq:example_fast_variable}
\end{align}
\label{eq:example_model}\end{subequations}where $V$ is the standard
Brownian motion and has initial conditions $x_{0}$ and $y_{0}$. The invariant density, $\rho^{\infty}$, of the fast
process in \eqref{eq:example_fast_variable} is the standard normal. 
Without loss of generality, we assume $\sigma=1$ so that the corresponding
homogenised SDE is given by
\begin{eqnarray*}
dX & = & \left(2\cdot\mathbb{E}\left[f(y)\Phi(y)\right]\right)^{1/2}dW=\sqrt{22}dW,
\label{eq:homo_cubed}
\end{eqnarray*}
where $W$ is a standard Brownian motion and is independent of $V$. 

Again, our objective is to examine the performance of the (ExtQV) estimator as an estimator of the diffusion coefficient of the homogenised equation. Table \ref{tab:example_1_model_table} shows the values of the expectation of the (ExtQV) and its corresponding $L_{2}$\textendash error when it is applied to the model \eqref{eq:example_model}. For this table we fix the value of $\sigma$ to $\sigma=0.1$ and we consider five values of $\epsilon=(0.20,0.15,0.10,0.05,0.01)$ and four values of $n=\left(10^{4},10^{5},10^{6},10^{7}\right)$. The corresponding diffusion coefficient of the homogenised equation in this case is $\Sigma^{2}=0.01\cdot22=0.22$.
As it can be seen from Table \ref{tab:example_1_model_table}, as the value of $n$ increases and the value of $\epsilon$ decreases, the expectation of the (ExtQV) tends to the real value of the homogenized diffusion coefficient. Furthermore, for decreasing $n$ and $\epsilon$ the $L_{2}$--error decreases as well and tends to zero.

\begin{table}[H]
\centering%
\begin{tabular}{|c|c||c|c|c|c|c|}
\cline{2-7}
\multicolumn{1}{c|}{} & $\epsilon$  & 0.20  & 0.15  & 0.10  & 0.05  & 0.01\tabularnewline
\hline
\hline
\multirow{2}{*}{$n=10^{4}$ } & $\mathbb{E}\left[D_{2}^{\text{Ext}}(x_{n})_{T}^{2}\right]$  & 0.1938  & 0.2087  & 0.2177  & 0.2368  & 1.6072\tabularnewline
\cline{2-7}
 & $L_{2}$-error  & 0.1260  & 0.0486  & 0.0237  & 0.0071  & 1.9342\tabularnewline
\hline
\multirow{2}{*}{$n=10^{5}$ } & $\mathbb{E}\left[D_{2}^{\text{Ext}}(x_{n})_{T}^{2}\right]$  & 0.2059  & 0.2086  & 0.2176  & 0.2281  & 0.2638\tabularnewline
\cline{2-7}
 & $L_{2}$-error  & 0.0860  & 0.0611  & 0.0257  & 0.0078  & 0.0023\tabularnewline
\hline
\multirow{2}{*}{$n=10^{6}$ } & $\mathbb{E}\left[D_{2}^{\text{Ext}}(x_{n})_{T}^{2}\right]$  & 0.1927  & 0.2047  & 0.2169  & 0.2217  & 0.2284\tabularnewline
\cline{2-7}
 & $L_{2}$-error  & 0.0630  & 0.0439  & 0.0268  & 0.0071  & 0.0004\tabularnewline
\hline
\multirow{2}{*}{$n=10^{7}$ } & $\mathbb{E}\left[D_{2}^{\text{Ext}}(x_{n})_{T}^{2}\right]$  & 0.2119  & 0.2075  & 0.211  & 0.2330  & 0.2209\tabularnewline
\cline{2-7}
 & $L_{2}$-error  & 0.0986  & 0.0434  & 0.0235  & 0.0073  & 0.0003\tabularnewline
\hline
\end{tabular}\caption[Expectation and $L_{2}$--error of the (ExtQV) for data generated by \eqref{eq:example_model}. Investigation of its behavior for different $n$'s
and $\epsilon$'s and for $\sigma=0.10$.]{\label{tab:example_1_model_table}Expectation and $L_{2}$--error of the (ExtQV) for different $\epsilon$'s, $n$'s and for $\sigma=0.10$.}
\end{table}

\end{example}

\begin{example} \label{exa: molel_example_2}Consider the following multiscale
system of SDEs
\begin{eqnarray*}
dx & = & \frac{\sigma}{\epsilon}\left(1-y^{2}\right)dt,\\
dy & = & -\frac{1}{\epsilon^{2}}ydt+\frac{\sqrt{2}}{\epsilon}dV,
\end{eqnarray*}
where $V$ is the standard Browian motion and initial conditions $x_{0}$ and $y_{0}$. For this example the corresponding homogenized SDE has
the following form
\begin{equation}
dX=\sigma\sqrt{2}dW.\label{eq:example_2_homo_SDE}
\end{equation}

Similarly to what we have done in the previous examples, we examine the (ExtQV) for four values of $n$ and five values of $\epsilon$ and the results are shown in Table \ref{tab:example_2_model_table}. 
\begin{table}[h!]
\centering%
\begin{tabular}{|c|c||c|c|c|c|c|}
\cline{2-7}
\multicolumn{1}{c|}{} & $\epsilon$  & 0.20  & 0.15  & 0.10  & 0.05  & 0.01\tabularnewline
\hline
\hline
\multirow{2}{*}{$n=10^{4}$ } & $\mathbb{E}\left[D_{2}^{\text{Ext}}(x_{n})_{T}^{2}\right]$  & 2.0426  & 2.0364  & 2.2174  & 2.3837  & 16.6118\tabularnewline
\cline{2-7}
 & $L_{2}$-error  & 5.4799  & 3.0374  & 2.1654  & 0.5906  & 213.9965\tabularnewline
\hline
\multirow{2}{*}{$n=10^{5}$} & $\mathbb{E}\left[D_{2}^{\text{Ext}}(x_{n})_{T}^{2}\right]$  & 1.9373  & 1.9752  & 2.0926  & 2.1041  & 2.6470\tabularnewline
\cline{2-7}
 & $L_{2}$-error  & 4.2892  & 3.2262  & 1.4939  & 0.3571  & 0.4380\tabularnewline
\hline
\multirow{2}{*}{$n=10^{6}$ } & $\mathbb{E}\left[D_{2}^{\text{Ext}}(x_{n})_{T}^{2}\right]$  & 1.9120  & 2.0136  & 1.9397  & 2.0416  & 2.1744\tabularnewline
\cline{2-7}
 & $L_{2}$-error  & 4.9281  & 2.9718  & 1.2007  & 0.3632  & 0.0463\tabularnewline
\hline
\multirow{2}{*}{$n=10^{7}$ } & $\mathbb{E}\left[D_{2}^{\text{Ext}}(x_{n})_{T}^{2}\right]$  & 1.9312  & 1.9721  & 2.0525  & 2.0061  & 2.0546\tabularnewline
\cline{2-7}
 & $L_{2}$-error  & 6.3127  & 3.3509  & 1.6203  & 0.3273  & 0.0176\tabularnewline
\hline
\end{tabular}\caption[Expectation and $L_{2}$--error of the (ExtQV) for data generated by the model in the Example \ref{exa: molel_example_2}. Investigation of its behavior for different $n$'s
and $\epsilon$'s and for $\sigma=1$.]{\label{tab:example_2_model_table}Expectation and $L_{2}$--error of the (ExtQV)  for different $\epsilon$'s, $n$'s and for $\sigma=1$.}
\end{table}
\end{example}

In the next example, we modify our context in the sense that the fast dynamics are not described by an (OU) process.
\begin{example}\label{ex: sin_ou_model}
Consider the following multiscale system
\begin{subequations}
\begin{eqnarray}
dx & = & \sigma\frac{\sin(y)}{\epsilon}dt,\label{eq:sin_model_slow}\\
dy & = & -\frac{\sin(y)}{\epsilon^{2}}dt+\frac{1}{\epsilon}dW,\label{eq:sin_model_fast}
\end{eqnarray}
\label{subeq: model_sin}
\end{subequations}
for which the corresponding homogenised SDE is
\begin{equation}
dX=\sigma dW.
\end{equation}

Table \ref{tab: sin_model} illustrates the expectation of the (ExtQV) and its corresponding $L_{2}$\textendash error when applied to the
model \eqref{subeq: model_sin} for $\sigma=\sqrt{0.5}$. As in the previous examples, we consider five values of $\epsilon=(0.20,0.15,0.10,0.05,0.01)$ and three values of $n=(10^{4},10^{5},10^{6})$. For $\sigma=\sqrt{0.5}$,
the corresponding homogenised diffusion coefficient is equal to 0.5. Similarly to the previous examples, we observe that as the value of
$n$ increases and the value of $\epsilon$ decreases both the expectation of the (ExtQV) and the $L_{2}$\textendash error tend to the desired quantity, that is the real value of the homogenised and coefficient and zero respectively.
\begin{table}[H]
\centering%
\begin{tabular}{|c|c|c|c|c|c|c|}
\hline
 & $\epsilon$ & 0.20 & 0.15 & 0.10 & 0.05 & 0.01\tabularnewline
\hline
\hline
\multirow{2}{*}{$n=10^{4}$} & $\mathbb{E}\left[D_{2}^{\text{Ext}}(x_{n})_{T}^{2}\right]$ & 0.3889 & 0.4023 & 0.4320 & 0.4604 & 0.8046\tabularnewline
\cline{2-7}
 & $L_{2}$\textendash error & 0.0571 & 0.0391 & 0.0198 & 0.0059 & 0.0932\tabularnewline
\hline
\multirow{2}{*}{$n=10^{5}$} & $\mathbb{E}\left[D_{2}^{\text{Ext}}(x_{n})_{T}^{2}\right]$ & 0.3707 & 0.3878 & 0.3974 & 0.4055 & 0.4336\tabularnewline
\cline{2-7}
 & $L_{2}$\textendash error & 0.0599 & 0.0408 & 0.0243 & 0.0123 & 0.0046\tabularnewline
\hline
\multirow{2}{*}{$n=10^{6}$} & $\mathbb{E}\left[D_{2}^{\text{Ext}}(x_{n})_{T}^{2}\right]$ & 0.3761 & 0.4005 & 0.4064 & 0.4201 & 0.4990\tabularnewline
\cline{2-7}
 & $L_{2}$\textendash error & 0.0577 & 0.0393 & 0.0228 & 0.0102 & 0.0002\tabularnewline
\hline
\end{tabular}
\caption{Expectation and $L_{2}$\textendash error of the (ExtQV) for different
$\epsilon$'s, $n$'s and $\sigma=\sqrt{0.5}$.\label{tab: sin_model}}
\end{table}
\end{example}

Finally, in the example below we demonstrate that our proposed estimator can be also applied in cases where the corresponding homogenised equation contains a drift term. 
\vspace{0.3cm}
\begin{example}
\label{exa:drift_model_Example}Consider the following fast/slow
system
\begin{subequations}
\begin{align}
dx & =\frac{\sigma}{\epsilon}ydt+\sin(x)dt,\label{eq:general_slow_variable_drift_example}\\
dy & =-\frac{1}{\epsilon^{2}}ydt+\frac{1}{\epsilon}dV.\label{eq:general_fast_variable_drift_example}
\end{align}
\label{eq:general_model_drift_example}\end{subequations}
 The corresponding homogenized SDE is
\begin{equation}
dX=\sin(X)dt+\sigma dW.\label{eq:Homo_general_model_drift_example}
\end{equation}
Similar numerical studies are performed for this model and Table \ref{tab:drift_model_table} shows the Expectation and $L_{2}$--error of the (ExQV) for the same values of $n$ and $\epsilon$ considered in the previous examples. The results, presented in table \ref{tab:drift_model_table}, indicate that the drift coefficient does not affect the behaviour of our proposed estimator.
\begin{table}[H]
\centering%
\begin{tabular}{|c|c||c|c|c|c|c|}
\cline{2-7}
\multicolumn{1}{c|}{} & $\epsilon$  & 0.20  & 0.15  & 0.10  & 0.05  & 0.01\tabularnewline
\hline
\hline
\multirow{2}{*}{$n=10^{4}$} & $\mathbb{E}\left[D_{2}^{\text{Ext}}(x_{n})_{T}^{2}\right]$  & 1.1429  & 1.1112  & 1.1039  & 1.1289  & 2.2738\tabularnewline
\cline{2-7}
 & $L_{2}$-error  & 0.4890  & 0.3055  & 0.1462  & 0.0505  & 1.6949\tabularnewline
\hline
\multirow{2}{*}{$n=10^{5}$} & $\mathbb{E}\left[D_{2}^{\text{Ext}}(x_{n})_{T}^{2}\right]$  & 1.1258  & 1.0709  & 1.0559  & 1.0491  & 1.2194\tabularnewline
\cline{2-7}
 & $L_{2}$-error  & 0.5250  & 0.2695  & 0.1312  & 0.0.12  & 0.0497\tabularnewline
\hline
\multirow{2}{*}{$n=10^{6}$} & $\mathbb{E}\left[D_{2}^{\text{Ext}}(x_{n})_{T}^{2}\right]$  & 1.1488  & 1.0808  & 1.0282  & 1.0446  & 1.2196\tabularnewline
\cline{2-7}
 & $L_{2}$-error  & 0.5822  & 0.2649  & 0.1115  & 0.0439  & 0.0164\tabularnewline
\hline
\multirow{2}{*}{$n=10^{7}$ } & $\mathbb{E}\left[D_{2}^{\text{Ext}}(x_{n})_{T}^{2}\right]$  & 1.1476  & 1.0728  & 1.0430  & 1.0595  & 1.2185\tabularnewline
\cline{2-7}
 & $L_{2}$-error  & 0.5661  & 0.2778  & 0.1097  & 0.0356  & 0.0167\tabularnewline
\hline
\end{tabular}\caption[Expectation and $L_{2}$--error of the (ExtQV) for data generated by the model in the Example \ref{exa:drift_model_Example}. Investigation of its behavior for different $n$'s
and $\epsilon$'s and for $\sigma=1$.]{\label{tab:drift_model_table}Expectation and $L_{2}$--error of the (ExtQV) for the model in Example \ref{exa:drift_model_Example}  for different $\epsilon$'s, $n$'s and for $\sigma=1$.}
\end{table}
\end{example}

\end{example} 

\end{document}